\theoremstyle{plain}
\newtheorem{theorem}{Theorem}
\newtheorem{thm}[theorem]{Theorem}
\newtheorem{lem}[theorem]{Lemma}
\newtheorem{cor}[theorem]{Corollary}
\newtheorem{prop}[theorem]{Proposition}
\theoremstyle{remark}
\newtheorem{rem}[theorem]{Remark}
\theoremstyle{definition}
\newtheorem{defn}[theorem]{Definition}
\newtheorem{ex}[theorem]{Example}
\newcommand{\red}{\textcolor{red}}
\newcommand{\R}{\mathbb{R}}
\newcommand{\PP}{\mathbb{P}}
\renewcommand{\P}{\mathcal{P}}
\newcommand{\e}{\mathbf{e}}
\newcommand{\B}{\mathcal{B}}
\newcommand{\Z}{\mathbf{Z}}
\newcommand{\dv}{\mathbf{1}\hspace{-6 pt}\mathbf{1}}
\newcommand{\x}{\mathbf{x}}
\newcommand{\y}{\mathbf{y}}
\newcommand{\bel}[1]{\begin{equation}\label{#1}}
\newcommand{\be}{\begin{equation}}
\newcommand{\ba}{\begin{eqnarray}}
\newcommand{\ea}{\end{eqnarray}}
\newcommand{\qe}{\end{equation}}
\newcommand{\al}{\alpha}
\newcommand{\alv}{\boldsymbol{\alpha}}
\newcommand{\tev}{\boldsymbol{\theta}}
\newcommand{\de}{\delta}
\newcommand{\Om}{\Omega}
\newcommand{\De}{\Delta}
\newcommand{\suml}{\sum\limits}
\newcommand{\eps}{\varepsilon}
\begin{document}

\title{On the asymptotic behavior of the Diaconis and Freedman's chain in a multidimensional simplex}
\author{
Marc Peign\'e
\and
Tat Dat Tran
}

\newcommand{\Addresses}{{
  \bigskip
  \footnotesize

  \medskip
  
 Marc Peign\'e, \textsc{Institut Denis Poisson UMR 7013,  Universit\'e de Tours, Universit\'e d'Orl\'eans, CNRS  France. }\\ 
  \textit{E-mail address}: \texttt{peigne@univ-tours.fr}

  \medskip

  Tat Dat Tran, \textsc{Max-Planck-Institut f\"ur Mathematik in den Naturwissenschaften, Inselstrasse 22, D-04103 Leipzig, Germany\\
  Mathematisches Institut, Universität Leipzig, Augustusplatz 10, D-04109 Leipzig, Germany}\\
  \textit{E-mail address}:  \texttt{trandat@mis.mpg.de}, \texttt{tran@math-uni.leipzig.de}

}}

\date{\today}
\maketitle

\medskip

\abstract In this paper, we give out a setting of an Diaconis and Freedman's chain in a multidimensional simplex and consider its asymptotic behavior.  By using techniques in random iterated functions theory and quasi-compact operators theory, we first give out some  sufficient conditions which ensure the existence  and uniqueness of an invariant probability measure. In some particular cases, we  give out explicit formulas of the  invariant probability density. Moreover, we completely classify all behaviors of this chain in dimensional two. Eventually, some other settings of the chain are discussed. 

\endabstract

\maketitle

{\it  MSC2000: 60J05, 60F05}

{\it Key words: Iterated function systems, quasi-compact linear operators, absorbing compact set, invariant probability measure, invariant probability density}

\tableofcontents


\section{Introduction}\label{sec:intro}

The main motivation in this paper is to propose a general setting for the so called ``Diaconis and Freedman's chain''  in $\mathbb R^d, d\geq 2$. First, we give out the most natural setting of this chain on a $d$-dimensional simplex and consider its asymptotic behavior by using   techniques from  random iterated functions theory and quasi-compact operators theory (see \cite{LP19} for using these techniques in dimensional one).  We have recently learnt that this multi-dimensional setting is also considered in \cite{MV2020} where the authors used another approach and consider only the cases of ergodicity.  Then, we  also discuss some other possible extensions.

Markov chains generated by products of independent random iterated functions have been the object of numerous works for more than 60 years. We refer to \cite{Harris52}, \cite{Bush1953}, \cite{Karlin1953} for first models designed for analyzing data in learning, \cite{Dubins1966}, \cite{GR86}, \cite{Letac86},
\cite{Mirek11} or \cite{Stenflo12} and references therein; see also \cite{Peigne11a} and \cite{Peigne11b} for such processes with weak contraction assumptions on the involved random functions.

In \cite{DF99}, Diaconis and Freedman focus on the Markov chain $(Z_n)_{n \geq 0}$ on $[0, 1]$  {randomly generated by the two families of maps $\mathcal H:= \{h_t: [0,1]\to [0,1], x \mapsto  tx \}_{t \in [0, 1]}$ and  $\mathcal A:= \{a_t: [0,1]\to [0,1], x\mapsto tx+1-t\}_{t \in [0, 1]}$; at each step, a map is randomly chosen with probability $p$ in the set $\mathcal H$ and $q=1-p$ in the set $\mathcal A$, then uniformly with respect to the parameter $t\in [0, 1]$. When the weight $p$ is constant, the random maps (see Section 3 for a detail introduction) which control the transitions of  this chain are i.i.d., otherwise the process $(Z_n)_{n\ge 0}$ is no longer in the framework of products of independent random functions. This class of such processes has been studied for a few decades, with various assumptions put on the state space (e.g. compactness) and the regularity of the weight functions. We refer to, for instance, \cite{Kaijser81}, \cite{BE88}, \cite{Barnsley88}, \cite{Barnsley89} with connections to image encoding a few years later, and \cite{KS17} more recently. All these works concern sufficient conditions for the unicity of the invariant measure and do not explore the case when there are several invariant measures. As far as we know, the coupling method does not seem to be relevant to study this type of Markov chains when there are further invariant measures, or, equivalently, when the space of harmonic functions is not reduced to constant.

For the Diaconis and Freedman' chain in  dimension 1, a systematic approach has been developed in \cite{LP19}, based on the theory of quasi-compact operators (also described in \cite{Peigne93} and \cite{HH01}); the authors describe completely the peripheral spectrum of the transition operator $P$  of $(Z_n)_{n \geq 0}$ and use  a precise control of the action of the family   of functions generated by the sets $\mathcal H$ and $\mathcal A$ according to $p$ and $P$. However, a multidimensional setting for such problems has not been touched;  it is our aim to introduce and analyse it here.

The paper is organized as follows. In Section 2 we give out our setting of the Diaconis and Freedman's chain in dimension $d \geq 2$.  Some properties of the transition operator and its dual operator have been considered and the uniqueness of the stationary density function has been shown (Corollary~\ref{cor:uniqueness}). In Section 3 we give out some results on uniqueness of invariant measures  (Theorems~\ref{thm:uniqueConstant} and ~\ref{thm:uniqueDepend}) based on concepts and results from the iterated functions system theory. Some special cases where we can find the explicit formula of the unique invariant density are  considered in Section 4. Section 5  contains our main result (Theorem~\ref{2DimGENERALDF}) where   we classify set of invariance probability measures and consider the asymptotic behavior of $(Z_n)_{n\ge 0}$. We discuss some future research directions in Section 6.

\section{The Diaconis and Freedman's chain  in dimension $\geq 2$}
In this section we consider a particular setting for the multi-dimensional problem of Diaconis and Freedman's chain. In fact, there are many ways to set which are based on different application models. Our setting here is fit for applications of robot controlling. Other interesting settings as well as their applications will be considered in details in somewhere else. 
Denote by 
\[
\De_d := \{ \x =(x_i)_{1\leq i \leq d} \in \R^d_{\ge 0} : |\x| = x_1+\cdots+x_{{d}} \le 1\} = co \{\e_0,\e_1,\ldots, \e_d\}
\]
a closed $d$-dimensional simplex with vertices $\e_0,\e_1,\ldots, \e_d$, where $\e_0 = (0,\ldots,0)$ and $\e_i=(0,\ldots,\underbrace{1}_{i^{th}},\ldots,0)$ for $1\leq i\leq d$. {From now on, for any $\x \in \De_d$, we set $x_0= 1-\vert \x\vert$; it holds $\x= x_0 \e_0+x_1 \e_1+\ldots + x_d \e_d$ with $x_i \geq 0$ and $x_0+\ldots +x_d=1$.}

We consider the Markov chain $(Z_n)_{n\ge 0}$ on  the simplex $\De_d$ corresponding to the successive positions  of a robot, according to the following rules:

- the  robot is put randomly at a point $Z_0$ in $\De_d$;

- if at time $n \ge0$, it is located at $Z_n=\x\in \De_d$, then it chooses the vertex $\e_i, 0\leq i\leq d$,  with probability $p_i(\x)$ for the next moving direction and uniformly randomly move to  some point on  the open line segment $(\x, \e_i):= \{t\x + (1-t)\e_i\mid  t\in (0,1)\}$.  

We assume that the functions $p_i, 0\leq i\leq d$, are continuous and non negative on $\De_d$ and satisfy $\sum_{i=0}^n p_i(\x) =1$ for any $\x \in \Delta_d$.

Let us make this description more rigorous. For any $ i=0, \ldots, d$ and $\x \in \Delta_d$,  denote by $\mu_i(\x,\cdot)$ the uniform distribution on $(\x, \e_i)$; it  is defined on open intervals $(\y_1,\y_2) \in \B((\x, \e_i))$ as
\bel{eq:1d}
\mu_i(\x, (\y_1,\y_2)) := |t(\x,\y_2,\e_i)-t(\x,\y_1,\e_i)|,
\qe
where the real number $t = t(\x,\y,\e_i)\in (0,1)$ solves the equality $\y = t \x + (1-t) \e_i$.
The {\it one-step transition probability function} $P$ of $(Z_n)_{n\ge 0}$ is 
\bel{eq:1}
P(\x,d\y)
= \sum_{i=0}^d p_i(\x) \mu_i(\x, d\y \cap (\x,\e_i)), \quad \x \in \De_d.
\qe
We   illustrate this setting in $\De_2$ in Figure~\ref{fig:2B}.
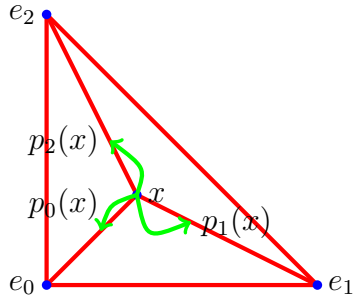
\begin{figure}[htb!]
\begin{center}
\begin{tikzpicture}[scale=1.2]
    \draw[color=red, ultra thick] (0,0) -- (3,0);
    \draw[color=red, ultra thick] (0,0) -- (0,3);
    \draw[color=red, ultra thick] (0,3) -- (3,0);
    \draw[color=red, ultra thick] (0,0) -- (1,1);
    \draw[color=red, ultra thick] (0,3) -- (1,1);
    \draw[color=red, ultra thick] (1,1) -- (3,0);
   \node[left] at (0,0) {$e_0$} ; 
   \node[right] at (3,0) {$e_1$} ; 
   \node[left] at (0,3) {$e_2$} ; 
   \node[below,right] at (1,1) {$x$} ; 
   \fill[blue] (0,0) circle (1.5pt);
   \fill[blue] (1,1) circle (1.5pt);   
   \fill[blue] (3,0) circle (1.5pt);
    \fill[blue] (0,3) circle (1.5pt);
    \draw[color=green, ultra thick, ->] (1,1) .. controls (0.7,.9) .. (0.6,0.6);
     \draw[color=green, ultra thick, ->] (1,1) .. controls (1.1,.5)  .. (1.6,.7);
      \draw[color=green, ultra thick, ->] (1,1) .. controls (1.1,1.3)  .. (0.7,1.6);
     \node[right] at (1.6,.7) {$p_1(x)$} ; 
      \node[left] at (0.7,1.6) {$p_2(x)$} ; 
            \node[left] at (0.7,0.9) {$p_0(x)$} ; 
        
         \end{tikzpicture}
\end{center}
\caption{The Diaconis and Freedman's chain in $\De_2$.}\label{fig:2B}
\end{figure}

We want  to classify the invariant probability measures of  the chain $(Z_n)_{n\ge 0}$ and to describe  its    behavior  as $n \to +\infty.$ Our approach is based on the description of the spectrum, on some suitable space to specify, of the   operator  corresponding to the  one-step transition probability function $P$, also denoted by $P$. Let us first introduce this transition operator.

We denote by $  \mathbb L^{\infty}(\De_d,d\x)$ the space of all bounded measurable functions $f:\De_d \to \mathbb C$ and  $\mathbb L^1(\De_d,d\y)$ the space of all integrable measurable functions $g: \De_d \to \mathbb C$; they are  Banach spaces,  endowed respectively with the norms $\Vert f\Vert_\infty := \sup_{\x\in\De_d} |f(\x)|$ and  $\Vert g\Vert_1 := \int_{\Delta_d}  |g(\y)|d\y$.

We also denote by $Den(\De_d,d\y)=\{g\in \mathbb L^1(\De_d,d\y): g\ge 0 \text{ and } \int_{\De_d}g(\y)d\y = 1\}$ the space of all probability densities on $\De_d$ with respect to the reference Lebesgue measure $d\y$. The set  $(Den(\De_d,d\y),d)$ is a complete metric space for  the distance $d(f,g) :=\Vert f-g\Vert_1$; furthermore, 
 $Den(\De_d,d\y)$ is a nonempty closed convex subset of the Banach space $\mathbb L^1(\De_d, d\y)$ and it contains the constant function $g(\y) \equiv  d !$. 
 
 We   drop the reference Lebesgue measure $d\x, d\y$ in our notations where no ambiguity arises.

The transition operator of the chain $(Z_n)_{n\ge 0}$ is defined by 
\begin{align}\label{operatorP}
P: \quad \mathbb L^{\infty}(\De_d) &\rightarrow \mathbb  L^{\infty}(\De_d)\\\notag
f \qquad &\mapsto 
\left(Pf: \x\to  \int_{\De_d} f(\y)P(\x,d\y)\right).
\end{align}
Its dual operator $P^* : \mathbb L^1(\De_d) \to \mathbb L^1(\De_d)$ is defined by
\begin{align}\label{operatorP*}
\int_{\De_d} Pf(\x)g(\x)d\x = \int_{\De_d} f(\x) P^*g(\x) d\x.
\end{align}
Let us explicit the form of these two operators.
\begin{lem} Let $P$ be the transition operator of $(Z_n)_{n\ge 0}$ and $P^*$ its dual operator. Then
\bel{defn:Q}
Pf(\x) = \sum_{i=0}^d p_i(\x) \int_0^1 f(t\x + (1-t)\e_i) dt
\qe
and  
\begin{equation}\label{defn:P*}
P^*g(\y)  = \sum_{i=0}^d \int_{1-y_i}^{1} t^{-d} G_i\Bigg(\frac{1}{t}\y + \Big(1-\frac{1}{t}\Big)\e_i\Bigg) dt
 =\sum_{i=0}^d \int_{1}^{\frac{1}{1-y_i}} s^{d-2} G_i\Bigg(s\y + \Big(1-s\Big)\e_i\Bigg) ds
\end{equation}
where $G_i(\y) = g(\y) p_i(\y)$.
\end{lem}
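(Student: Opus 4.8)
The plan is to prove the two formulas in order: the expression \eqref{defn:Q} for $Pf$, which is just a reparametrization of the transition kernel, and then the expression \eqref{defn:P*} for $P^*g$, which follows from the duality identity \eqref{operatorP*} combined with a fibered change of variables on the simplex.

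For \eqref{defn:Q}, I would start from $Pf(\x)=\int_{\De_d} f(\y)P(\x,d\y)$, insert \eqref{eq:1}, and obtain $Pf(\x)=\sum_{i=0}^d p_i(\x)\int_{(\x,\e_i)} f(\y)\,\mu_i(\x,d\y)$. The observation is that $\mu_i(\x,\cdot)$ is, by its defining relation \eqref{eq:1d}, exactly the pushforward of Lebesgue measure on $(0,1)$ under the parametrization $t\mapsto t\x+(1-t)\e_i$: if $\y_k=t_k\x+(1-t_k)\e_i$ then $t(\x,\y_k,\e_i)=t_k$, so $\mu_i(\x,(\y_1,\y_2))=|t_2-t_1|$. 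Hence $\int_{(\x,\e_i)} f\,d\mu_i(\x,\cdot)=\int_0^1 f(t\x+(1-t)\e_i)\,dt$, which is \eqref{defn:Q}.

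For the dual operator I would substitute \eqref{defn:Q} into \eqref{operatorP*}, writing $G_i=p_i g$, to get
\[
\int_{\De_d} Pf(\x)g(\x)\,d\x=\sum_{i=0}^d \int_{\De_d}\int_0^1 f(t\x+(1-t)\e_i)\,G_i(\x)\,dt\,d\x .
\]
For each fixed $i$ and $t\in(0,1)$ I would substitute $\y=t\x+(1-t)\e_i$ in the inner $\x$-integral; the inverse map is $\x=\frac{1}{t}\y+(1-\frac{1}{t})\e_i$, the Jacobian of $\x\mapsto\y$ is $t^d$, so $d\x=t^{-d}\,d\y$, and $\y$ ranges over $t\De_d+(1-t)\e_i\subseteq\De_d$. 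Since $f\in\mathbb L^\infty(\De_d)$ and $G_i\in\mathbb L^1(\De_d)$, Fubini applies and I may exchange the $t$- and $\y$-integrations; comparing the coefficient of $f(\y)$ with $\int_{\De_d} f(\x)P^*g(\x)\,d\x$ then identifies $P^*g(\y)$, once the range of $t$ for fixed $\y$ is determined.

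This last determination is the only genuinely computational step and the main (minor) obstacle: I must find, for fixed $\y\in\De_d$, which $t\in(0,1)$ keep $\x=\frac{1}{t}\y+(1-\frac{1}{t})\e_i$ inside $\De_d$. Geometrically, as $t$ decreases from $1$ the preimage $\x$ slides along a ray away from $\e_i$ and exits $\De_d$ through the facet $\{x_i=0\}$ opposite $\e_i$ (with the convention $x_0=1-|\x|$). Concretely, for $i\geq 1$ one has $x_j=y_j/t\geq 0$ for $j\neq i$ and $x_i=1-(1-y_i)/t$, while $|\x|=1+(|\y|-1)/t\leq 1$ automatically, so the binding constraint is $x_i\geq 0\iff t\geq 1-y_i$; for $i=0$ one has $\x=\y/t$ and the binding constraint $|\x|\leq 1$ reads $t\geq|\y|=1-y_0$. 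In every case the admissible range is $t\in[1-y_i,1)$, which yields the first expression in \eqref{defn:P*}. The second expression follows from the substitution $s=1/t$, under which $t^{-d}\,dt=s^{d-2}\,ds$ and the limits $t\in[1-y_i,1]$ become $s\in[1,\tfrac{1}{1-y_i}]$.
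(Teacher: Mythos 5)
Your proof is correct, and it follows the same overall strategy as the paper (establish \eqref{defn:Q} by recognizing $\mu_i(\x,\cdot)$ as the pushforward of Lebesgue measure on $(0,1)$, then obtain $P^*$ from the duality relation \eqref{operatorP*} via a change of variables and Fubini). The difference is in the execution of the $P^*$ computation: the paper works only in $d=2$, performing successive one-dimensional substitutions $y_2=tx_2$, $y_1=tx_1$ and repeated interchanges of the order of integration, and then asserts that ``the same argument holds for any $d$''; you instead perform a single $d$-dimensional change of variables $\y=t\x+(1-t)\e_i$ with Jacobian $t^d$, and you determine the admissible range $t\in[1-y_i,1)$ directly from the constraints defining $\De_d$ (the binding one being $x_i\ge 0$ for $i\ge 1$ and $|\x|\le 1$ for $i=0$). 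Your version is therefore more general and arguably cleaner: it treats all $d$ and all $i$ uniformly, it makes explicit where the lower limit $1-y_i$ comes from (in the paper it emerges somewhat opaquely from the nested Fubini swaps), and it isolates the only nontrivial point, namely the identification of the $t$-range for fixed $\y$. The paper's coordinate-by-coordinate computation buys a fully elementary verification in the low-dimensional case at the cost of leaving the general-$d$ claim to the reader. Your concluding substitution $s=1/t$ giving $t^{-d}\,dt=-s^{d-2}\,ds$ and the limits $s\in[1,\tfrac{1}{1-y_i}]$ is also correct.
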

\begin{proof}Equality  Eq.~\eqref{eq:1} yields
\begin{align*}
Pf(\x) &= \int_{\De_d} f(\y)P(\x,d\y) = \sum_{i=0}^d p_i(\x) \int_{\De_d} f(\y) \mu_i(d\y \cap (\x,\e_i))\\
 &= \sum_{i=0}^d p_i(\x) \int_0^1 f(t\x + (1-t)\e_i) dt.
 \end{align*}
For the computation of $P^*$, we assume $d=2$; the same argument holds for any $d$. 
For all $f\in \mathbb  L^{\infty}(\De_d)$ and $g\in \mathbb L^1(\De_d)$, 
\begin{align*}
\int_{\De_2} f(\x) P^*g(\x) d\x &= \int_{\De_2} Pf(\x)g(\x) d\x \\ 
&=  \sum_{i=0}^2 \int_{\De_2}\Bigg( p_i(\x) \int_0^1 f(t\x + (1-t)\e_i) dt \Bigg) g(\x)  d\x\\
&=  \sum_{i=0}^2 \int_{\De_2}\Big( G_i(\x) \int_0^1 f(t\x + (1-t)\e_i) dt  \Big) d\x.
\end{align*}
Let us detail the computation of  the term $\int_{\De_2}\Big( G_0(\x) \int_0^1 f(t\x) dt  \Big) d\x$; the same calculation holds for the other terms. Namely, 
\begin{align*}
\int_{\De_2}\Big( G_0(\x) \int_0^1 f(t\x) dt  \Big) d\x &= \int_{0}^1 \Bigg[\int_0^{1-x_1} \Big( G_0(\x) \int_0^1 f(t\x) dt  \Big) dx_2 \Bigg] dx_1\\
&= \int_{0}^1 \Bigg[ \int_0^{1} \Big( \int_0^{1-x_1}  G_0(\x)f(t\x) dx_2  \Big) dt \Bigg] dx_1\\
&= \int_{0}^1 \Bigg[ \int_0^{1} \Big( \int_0^{1-x_1}  G_0(\x)f(t\x) dx_2  \Big) dx_1 \Bigg] dt\\
&\stackrel{y_2=tx_2}{=} \int_{0}^1 \Bigg[ \int_0^{1} \Big( \int_0^{(1-x_1)t}  G_0\Big(x_1,\frac{y_2}{t}\Big)f(tx_1, y_2) \frac{dy_2}{t}  \Big) dx_1 \Bigg] dt\\
&\stackrel{y_1=tx_1}{=} \int_{0}^1 \Bigg[ \int_0^{t} \Big( \int_0^{t-y_1}  G_0\Big(\frac{y_1}{t},\frac{y_2}{t}\Big)f(y_1, y_2) \frac{dy_2}{t}  \Big)  \frac{dy_1}{t}  \Bigg] dt\\
&= \int_{0}^1 \Bigg[ \int_{y_1}^{1} \Big( \int_0^{t-y_1} \frac{1}{t^2} G_0\Big(\frac{y_1}{t},\frac{y_2}{t}\Big)f(y_1, y_2) dy_2  \Big)  dt  \Bigg] dy_1\\
&= \int_{0}^1 \Bigg[ \int_{0}^{1-y_1} \Big( \int_{y_1+y_2}^{1} \frac{1}{t^2} G_0\Big(\frac{y_1}{t},\frac{y_2}{t}\Big)f(y_1, y_2) dt  \Big)  dy_2  \Bigg] dy_1\\
&= \int_{\De_2} f(\y) \Big( \int_{1-y_0}^{1} t^{-2} G_0\Big(\frac{1}{t} \y\Big) dt  \Big)  d\y.
\end{align*}
Similarly
$\displaystyle\int_{\De_2}\Big( G_i(\x) \int_0^1 f(t\x + (1-t)\e_i) dt  \Big) d\x=  \int_{1}^{\frac{1}{1-y_i}}   G_i\Bigg(s\y + \Big(1-s\Big)\e_i\Bigg) ds$ for   $i=1, 2$ and (\ref{defn:P*}) follows.
\end{proof}
 \begin{rem}
In  dimension $d=1$, this is thus the expression of $P^*$ given in  \cite{LP19}:
\[
P^*g(y)  = \int_{1-y}^{1} t^{-1} G_1\Bigg(\frac{1}{t}y + \Big(1-\frac{1}{t}\Big)\Bigg) dt+ \int_{y}^{1} t^{-1} G_0\Bigg(\frac{1}{t}y\Bigg) dt = \int_0^y \frac{G_1(s)}{1-s}ds + \int_{y}^1 \frac{G_0(s)}{s}ds.
\] 
\end{rem}
Let us summarize some simple properties of $P$ and $P^*$.
\begin{prop}\label{PP*}
\begin{enumerate}
\item The operator $P$ is a Markov operator  on $\mathbb L^\infty(\De_d,d\x)$, i.e.
 \begin{enumerate}
\item [(i)] $Pf \ge 0$ whenever $f\in \mathbb L^\infty(\De_d,d\x)$ and $f\ge 0$; 
\item [(ii)] $P1=1$.
\end{enumerate}
In particular, $\Vert Pf\Vert_\infty \le \Vert f\Vert_\infty$ for any $f\in \mathbb L^\infty(\De_d,d\x)$.
Furthermore, $P$ is a Feller operator on $\De_d$,  i.e. $Pf \in C(\De_d)$ for all $f\in C(\De_d)$.
\item $P^*$ acts on $\mathbb L^1(\De_d,d\y)$ and, for any  non negative function  $g\in \mathbb L^1(\De_d,d\y)$, 
\[ P^*g \ge 0\quad \text{and} \quad  
\Vert P^*g\Vert _1 =\Vert g\Vert _1.
 \]
Furthermore, $P^*$ acts on $Den(\De_d,d\y)$, i.e., $P^*: Den(\De_d,d\y) \to Den(\De_d,d\y)$ and, for all  $ g_1\ne g_2\in Den(\De_d,d\y)$, 
\begin{equation}\label{contractionstricte}
\Vert P^*g_1-P^*g_2\Vert_1  <\Vert g_1-g_2\Vert_1.
\end{equation}
\end{enumerate}
\end{prop}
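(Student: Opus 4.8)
The plan is to read the first two assertions straight off the explicit formulas \eqref{defn:Q} and \eqref{defn:P*} and to spend the real effort on the strict contraction \eqref{contractionstricte}, which I treat last. For part (1), positivity of $P$ and the identity $P1=1$ are immediate from \eqref{defn:Q}, since $p_i\ge 0$, $\int_0^1 1\,dt=1$ and $\sum_{i=0}^d p_i\equiv 1$; the bound $\Vert Pf\Vert_\infty\le\Vert f\Vert_\infty$ then follows by applying $P$ to the pointwise inequalities $-\Vert f\Vert_\infty\le f\le\Vert f\Vert_\infty$ and invoking (i)--(ii). For the Feller property I would fix $f\in C(\De_d)$ and show each map $\x\mapsto\int_0^1 f(t\x+(1-t)\e_i)\,dt$ is continuous: $f$ is uniformly continuous on the compact set $\De_d$, so $\x_n\to\x$ forces $f(t\x_n+(1-t)\e_i)\to f(t\x+(1-t)\e_i)$ uniformly in $t$, and one passes to the limit under the integral; continuity of the $p_i$ finishes the argument.

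For part (2), $P^*g\ge 0$ whenever $g\ge 0$ is read off \eqref{defn:P*}: $G_i=gp_i\ge 0$ and the integrand $s^{d-2}G_i(\cdots)$ is non-negative on the range $s\in[1,\tfrac{1}{1-y_i}]$. Mass conservation is cleanest through duality: taking $f\equiv 1$ in \eqref{operatorP*} and using $P1=1$ gives $\int_{\De_d}(P1)\,g=\int_{\De_d}1\cdot P^*g$, that is $\int_{\De_d}g=\int_{\De_d}P^*g$; combined with $P^*g\ge 0$ this yields $\Vert P^*g\Vert_1=\Vert g\Vert_1$ for $g\ge 0$. In particular, if $g\in Den(\De_d)$ then $P^*g\ge 0$ and $\int_{\De_d}P^*g=1$, so $P^*g\in Den(\De_d)$.

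The heart of the statement is \eqref{contractionstricte}. I would set $h:=g_1-g_2$ and write its Jordan decomposition $h=h^+-h^-$, so that $h^{\pm}\ge 0$ have disjoint supports and $\Vert h\Vert_1=\Vert h^+\Vert_1+\Vert h^-\Vert_1$. Since $\int_{\De_d}h=\int_{\De_d}g_1-\int_{\De_d}g_2=0$ we get $\int_{\De_d}h^+=\int_{\De_d}h^-$, and because $g_1\ne g_2$ both are strictly positive. Using $P^*h=P^*h^+-P^*h^-$, the pointwise identity $|a-b|=a+b-2\min(a,b)$ for $a,b\ge 0$, and the isometry just established, one obtains the exact formula
\begin{equation*}
\Vert P^*h\Vert_1=\Vert P^*h^+\Vert_1+\Vert P^*h^-\Vert_1-2\int_{\De_d}\min(P^*h^+,P^*h^-)=\Vert h\Vert_1-2\int_{\De_d}\min(P^*h^+,P^*h^-).
\end{equation*}
Hence \eqref{contractionstricte} is \emph{equivalent} to strict positivity of the overlap $\int_{\De_d}\min(P^*h^+,P^*h^-)>0$; equivalently, $\mathrm{supp}(P^*h^+)$ and $\mathrm{supp}(P^*h^-)$ must meet in a set of positive Lebesgue measure.

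This overlap is the main obstacle, and it is where the geometry of the chain enters. From \eqref{defn:P*}, a point $\y$ lies in $\mathrm{supp}(P^*g)$ as soon as, for some vertex $\e_i$ with $p_i>0$, the half-line issued from $\e_i$ through $\y$ and continued beyond $\y$ meets $\mathrm{supp}(g)$; equivalently $\y\in(\e_i,\z)$ for some $\z\in\mathrm{supp}(g)$. Thus $\mathrm{supp}(P^*h^{\pm})$ contains the union, over admissible vertices $\e_i$, of the cones $C_i(\mathrm{supp}\,h^{\pm}):=\{t\z+(1-t)\e_i:\ \z\in\mathrm{supp}\,h^{\pm},\ t\in(0,1)\}$, each of positive measure. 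I would then reduce \eqref{contractionstricte} to the geometric lemma that, for any two positive--measure sets $S^+,S^-\subset\De_d$, there are vertices $\e_i,\e_j$ with $C_i(S^+)\cap C_j(S^-)$ of positive measure: choosing density points $a\in S^+$ and $b\in S^-$, one produces a common interior point $\y_0=(1-t)\e_i+ta=(1-t')\e_j+t'b$ by letting two of the segments $(\e_i,a)$ and $(\e_j,b)$ cross, and then thickens the construction using that $S^{\pm}$ have positive measure and the $p_i$ are continuous and positive near $a,b$. I expect the verification that such a crossing pair always exists---together with the bookkeeping in dimension $d\ge 3$, where two individual segments need not meet although the $d$--dimensional cones still overlap---to be the delicate point; note that the positivity of the weights $p_i$ is genuinely used here, since if a weight vanishes identically the available cones can be forced apart and the strict inequality is lost.
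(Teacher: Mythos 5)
Your handling of part (1) and of the non-strict assertions in part (2) matches the paper, which dispatches them as immediate consequences of the explicit formulas \eqref{defn:Q} and \eqref{defn:P*}. For the strict inequality \eqref{contractionstricte} your reduction is correct and in fact more careful than the paper's own argument: writing $h=h^+-h^-$ and using the isometry on non-negative functions, equality $\Vert P^*h\Vert_1=\Vert h\Vert_1$ is equivalent to $\min(P^*h^+,P^*h^-)=0$ almost everywhere, i.e.\ to the essential disjointness of the supports of $P^*h^+$ and $P^*h^-$. The paper instead asserts that equality forces $P^*h^+\equiv P^*h^-\equiv 0$; this does not follow from the pointwise inequality $\max\{0,a-b\}+\max\{0,b-a\}\le a+b$, whose saturation only yields $\min(a,b)=0$ at each point (take $h\ge 0$, $h\not\equiv 0$: then $\Vert P^*h\Vert_1=\Vert h\Vert_1$ while $P^*h^+\not\equiv 0$). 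So the published proof skips exactly the point you isolate.

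The genuine gap in your proposal is that the overlap lemma is never proved, and under the paper's standing hypotheses ($p_i$ continuous, non-negative, summing to $1$) it is actually false, so this route cannot be completed without an additional positivity assumption. Concretely, in $\De_2$ take $p_0\equiv 1$ and $p_1=p_2\equiv 0$; then \eqref{defn:P*} gives $P^*g(\y)=\int_{|\y|}^{1}t^{-2}\,g(\y/t)\,dt$, so $\mathrm{supp}(P^*g)$ is contained in the cone with apex $\e_0$ generated by $\mathrm{supp}(g)$. Choosing $g_1,g_2$ to be normalized indicators of two small balls lying in disjoint angular sectors as seen from $\e_0$, the functions $P^*g_1$ and $P^*g_2$ have essentially disjoint supports, whence $\Vert P^*g_1-P^*g_2\Vert_1=2=\Vert g_1-g_2\Vert_1$. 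Your closing remark that ``if a weight vanishes identically the available cones can be forced apart'' is therefore not a technicality left to check but a counterexample to \eqref{contractionstricte} as stated (and, by the same token, to the paper's proof of it); the crossing argument you sketch is reasonable in $\De_2$ when all the relevant weights are bounded away from $0$, but that hypothesis appears neither in the proposition nor in the paper's argument, and in higher dimension the ``bookkeeping'' you defer would still have to be carried out. As it stands, your proof is incomplete at its decisive step, although the reduction leading to that step is sound and more honest than the original.
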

\begin{proof} The properties of $P$ are quite obvious; in particular, the fact that  $P$ is a Feller operator is easily checked from  the representation~\eqref{defn:Q} of $P$.
 Similarly, the first properties of $P^*$ follow from the definition.
 
To establish (\ref{contractionstricte}), we first recall that $|P^*h|\leq  P^*\vert h|$ for any $h\in \mathbb L^1(\De_d, d\y)$, which yields 
\[
\Vert P^*h\Vert_1 \leq\Vert (P^*\vert f\vert) \Vert _1=\Vert h\Vert_1.
\]
More precisely,
\begin{align*}
|P^*h| = (P^*h)_+ + (P^*h)_{-} &=   \max\{0,P^*h\} + \max\{0, -P^*h\}\\
&=  \max\{0,P^*h_+ - P^*h_-\} + \max\{0, P^*h_- - P^*h_+\}\\
&\le \max\{0,P^*h_+ \} + \max\{0, P^*h_- \}= P^*h_+ +P^*h_- = P^*|h \vert;
\end{align*}
hence, equality $\Vert P^*h\Vert_1 =\Vert h\Vert_1 $ holds if and only if  $P^*h_-\equiv 0$ and $P^*h_+\equiv 0$.

Now, we fix $ g_1\ne g_2\in Den(\De_d,d\y)$ and set 
 $h=g_1-g_2 \not\equiv 0$; it holds $\Vert P^*g_1-P^*g_2\Vert_1  \leq \Vert g_1-g_2\Vert_1$.  If	 $\Vert P^*g_1-P^*g_2\Vert_1 =\Vert g_1-g_2\Vert_1$ then $P^*h_-=P^*h_+\equiv 0$ i.e. $ P^*\vert h\vert  \equiv 0$;  therefore $ \Vert  h \Vert_1 =\Vert( P^* \vert h\vert) \Vert_1 =0$, so that $h\equiv 0$, contradiction.
\end{proof} 
As a direct consequence of (\ref{contractionstricte}), we may state the following corollary.
\begin{cor}[Uniqueness of the stationary density function]\label{cor:uniqueness}
If there exists a stationary density function for the Markov chain $(Z_n)_{n\ge 0}$ then it is unique.
\end{cor}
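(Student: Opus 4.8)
The plan is to read off the result directly from the strict‑contraction estimate~\eqref{contractionstricte} just established in Proposition~\ref{PP*}. The key observation is that a stationary density function for $(Z_n)_{n\ge 0}$ is, by definition, a probability density $g\in Den(\De_d,d\y)$ that is fixed by the dual operator, i.e. $P^*g=g$. Indeed, stationarity with respect to the Lebesgue reference measure means exactly that the pushforward of $g\,d\y$ under $P$ equals $g\,d\y$, which by the defining duality~\eqref{operatorP*} of $P^*$ translates into the fixed‑point equation $P^*g=g$. So the statement to prove is that $P^*$ has at most one fixed point inside $Den(\De_d,d\y)$.

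From here the argument is immediate and proceeds by contradiction. First I would suppose that $g_1$ and $g_2$ are two stationary densities with $g_1\ne g_2$; both lie in $Den(\De_d,d\y)$ and satisfy $P^*g_1=g_1$ and $P^*g_2=g_2$. Substituting these fixed‑point relations into~\eqref{contractionstricte} yields
\[
\Vert g_1-g_2\Vert_1=\Vert P^*g_1-P^*g_2\Vert_1<\Vert g_1-g_2\Vert_1,
\]
which is absurd since $g_1\ne g_2$ forces $\Vert g_1-g_2\Vert_1>0$. Hence any two stationary densities coincide.

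There is essentially no obstacle here: all the analytic work—namely verifying that $P^*$ maps $Den(\De_d,d\y)$ into itself and establishing the \emph{strict} $\mathbb L^1$‑contraction on distinct densities—has already been carried out in Proposition~\ref{PP*}, where the strictness was obtained from the characterization of the equality case $\Vert P^*h\Vert_1=\Vert h\Vert_1$ (forcing $P^*\vert h\vert\equiv0$ and thus $h\equiv0$). The only point deserving a word is the identification of \enquote{stationary density} with \enquote{fixed point of $P^*$}; once this is recorded, the corollary is a one‑line consequence of~\eqref{contractionstricte}.
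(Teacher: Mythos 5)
Your proof is correct and is essentially identical to the paper's own argument: both identify a stationary density with a fixed point of $P^*$ in $Den(\De_d,d\y)$ and derive an immediate contradiction with the strict contraction estimate \eqref{contractionstricte} from Proposition~\ref{PP*}. No further comment is needed.
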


\begin{proof}
Assume that there are two different stationary density functions $f\ne g\in Den(\De_d,d\y)$, i.e. $P^* f = f$ and $P^*g = g$. This   implies  $d(P^*f,P^*g) = d(f,g)$,  contradiction  with (\ref{contractionstricte}). 
\end{proof}

\begin{rem}
\begin{enumerate}
\item Although $(Den(\De_d,d\y),d)$ is a complete metric space,  the operator $P^*$ is not uniformly contractive, i.e. there exists $q\in [0,1)$ such that 
\[
d(P^* f, P^*g) \le q d(f,g) \quad \forall f,g \in Den(\De_d,d\y)
\]
therefore we can not apply the Banach fixed point theorem. In \cite[Proposition 2, p. 988-989]{RL10}, the authors applied the Banach fixed point theorem to prove the existence of the stationary density function but their argument does not work. A precise proof can be found in \cite[Theorem 3.1]{LP19} which covered all cases of $p_i(\x)$ in dimension 1.
\item Although $Den(\De_d,d\y)$ is a nonempty closed convex subset in a Banach space $\mathbb L^1(\De_d, d\x)$, we can not apply the Browder fixed point theorem, because  $\mathbb L^1(\De_d, d\x)$ is not  uniformly convex.
\item There are many cases of $p_i(\x)$ such that there is no stationary density function for the $(Z_n)_{n\ge 0}$ even in dimension 1: see cases 2 and 3 in \cite[Theorem 3.1]{LP19} where the set of invariant probability measures consist of convex combinations of Dirac measures $\de_0$ and $\de_1$. It will be interesting to classify cases of $p_i(\x)$ so that there exists (unique) a stationary density function. This is still an open question (see the last section of the present paper).   
\end{enumerate}
\end{rem}

\section{Uniqueness of invariant probability measure}
In this section, we recall some concepts as well as well-known results of iterated function systems and apply them to our model.

\subsection{Iterated function systems with place independent probabilities}

Let $(E, d)$ be  a compact metric space and denote $\displaystyle \mathbb L {\rm ip}(E, E) $ the space of Lipschitz continuous functions from  $E$  to $E$, i.e. of functions $T: E\to E$ such that 
$$ [T]:= \sup_{\stackrel{x, y
\in E}{x\neq y}} {d( T(x),T(y)) \over d(x,y)} < \infty.
$$
Let $(T_n)_{n \geq 1}$ be a sequence of i.i.d. random functions  defined on a probability space
 $(\Omega, \mathcal T, \mathbb P)$,   with values in $\mathbb L {\rm ip}(E, E)$
 and common distribution $\mu$. We consider the Markov chain $(X_n)_{n \geq 0} $ on $E$,
 defined by:  for any $n\geq 0$,
 \bel{eq:ifs}
 X_{n+1}:= T_{n+1}(X_n),
 \qe
 where $X_0$ is a fixed random variable  with values in $E$. One says that
  the chain $(X_n)_{n \geq 0} $ is
generated by   the {\it iterated function system}  $(T_n)_{n \geq
1}$. Its transition operator $Q$  is defined by: for any bounded
Borel
 function  $\varphi: E \to \mathbb C$
 and   any  $x\in E$
$$ Q\varphi(x) := \int_{\mathbb L {\rm ip}(E, E)} \varphi(T(x)) \mu({\rm d}T).$$
 The chain  $(X_n)_{n \geq 0} $ has the ``Feller property'',
i.e.  the operator $Q$ acts on the space  $C(E)$ of continuous
functions from $E$ to $\mathbb C$. The maps $T_n$ being Lipschitz
continuous on $E$,  the operator \red{$Q$} acts also on the space of
Lipschitz continuous functions from  $E$  to $\mathbb C$ and more generally
on the space  $ \mathcal   H_\alpha (E), 0<\alpha \leq 1$, of
$\alpha$-H\"older continuous functions from $E$ to $\mathbb C$,
defined by
$$
\mathcal   H_\alpha (E):= \{f\in C(E)\mid \Vert f\Vert_\alpha:= \Vert f\Vert_\infty +m_\alpha(f) <+\infty\}
$$
where $\displaystyle m_\alpha(f):=
\sup_{\stackrel{x, y \in E}{x\neq y}} {\vert f(x)-f(y)\vert \over
d(x,y)^\alpha} < \infty. $ 
Endowed, with the norm $\Vert \cdot
\Vert_\alpha$, the space $\mathcal   H_\alpha (E)$ is a Banach
space.

 The  behavior of the chain  $(X_n)_{n \geq 0} $ is closely related to
 the spectrum of the restriction of $Q$ to  these spaces.  Under some  ``contraction  in mean'' assumption
 on the  $T_n$, the restriction of $Q$ to  $\mathcal   H_\alpha (E)$ satisfies
 some spectral  gap property.
We first cite  the  following result in \cite[Proposition 2.1]{LP19}.

\begin{thm}[\cite{LP19}]\label{muconstant}
Assume that there   exists  $\alpha \in (0, 1]$
 such that
\begin{equation} \label{contraction}
r:= \sup_{\stackrel{x, y \in E}{x\neq y}}\int_{\mathbb L{\rm ip}(E, E)}\Bigl({ d(T(x),T(y)) \over d(x, y)}\Bigr)^\alpha \mu({\rm d}T) <1.
\end{equation}
Then, there exists on $E$ a unique $Q$-invariant probability measure    $\nu$.  Furthermore, there exists
 constants $\kappa >0$ and $\rho \in (0, 1)$    such that
\begin{equation} \label{vitesseExp}
\forall \varphi\in  \mathcal   H_\alpha (E),  \ \forall x \in E \quad  \vert Q^n\varphi(x)-\nu(\varphi)\vert \leq \kappa \rho^n.
\end{equation}
\end{thm}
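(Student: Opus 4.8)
The plan is to reduce the whole statement to a single contraction estimate for the H\"older seminorm $m_\alpha$ under $Q$, and then to transfer that contraction to the space of probability measures by Kantorovich--Rubinstein duality, where the Banach fixed point theorem applies directly. This route produces uniqueness of $\nu$ and the geometric rate \eqref{vitesseExp} simultaneously, without invoking quasi-compactness.

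The key computation I would carry out first is the seminorm contraction
\[
m_\alpha(Q\varphi) \le r\, m_\alpha(\varphi), \qquad \varphi \in \mathcal H_\alpha(E).
\]
Indeed, for $x \neq y$, the linearity and positivity of $Q$ give
\[
|Q\varphi(x)-Q\varphi(y)| \le \int_{\mathbb L{\rm ip}(E,E)} |\varphi(T(x))-\varphi(T(y))|\,\mu({\rm d}T) \le m_\alpha(\varphi)\, d(x,y)^\alpha \int_{\mathbb L{\rm ip}(E,E)} \Big(\frac{d(T(x),T(y))}{d(x,y)}\Big)^\alpha \mu({\rm d}T).
\]
Dividing by $d(x,y)^\alpha$, taking the supremum over $x\neq y$, and using the contraction-in-mean hypothesis \eqref{contraction} yields the claim; iterating gives $m_\alpha(Q^n\varphi) \le r^n\, m_\alpha(\varphi)$, while $\Vert Q^n\varphi\Vert_\infty \le \Vert\varphi\Vert_\infty$ comes from $Q$ being Markov.

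Next I would introduce on the set $\mathcal P(E)$ of Borel probability measures the dual metric
\[
W(\mu_1,\mu_2) := \sup\{\, |\mu_1(\varphi)-\mu_2(\varphi)| : \varphi \in \mathcal H_\alpha(E),\ m_\alpha(\varphi)\le 1 \,\}.
\]
Because $E$ is compact, any $\varphi$ with $m_\alpha(\varphi)\le 1$ has oscillation at most $\mathrm{diam}(E)^\alpha$, so $W$ is finite and bounded by $\mathrm{diam}(E)^\alpha$; since probability measures annihilate additive constants it is a genuine metric, and as $\mathcal H_\alpha$-functions separate measures it metrizes weak-$*$ convergence, whence $(\mathcal P(E),W)$ is complete by compactness of $E$. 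The push-forward operator $Q^*$, defined by $Q^*\mu(\varphi)=\mu(Q\varphi)$, maps $\mathcal P(E)$ into itself, and since $m_\alpha(Q\varphi)\le r$ whenever $m_\alpha(\varphi)\le 1$,
\[
W(Q^*\mu_1,Q^*\mu_2) = \sup_{m_\alpha(\varphi)\le 1} |\mu_1(Q\varphi)-\mu_2(Q\varphi)| \le r\, W(\mu_1,\mu_2).
\]
Banach's fixed point theorem then furnishes a unique $\nu\in\mathcal P(E)$ with $Q^*\nu=\nu$, i.e. a unique $Q$-invariant probability measure.

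Finally I would read off the rate: for $\varphi\in\mathcal H_\alpha(E)$ with $m_\alpha(\varphi)>0$ and $x\in E$, writing $Q^n\varphi(x)=((Q^*)^n\delta_x)(\varphi)$ and $\nu(\varphi)=((Q^*)^n\nu)(\varphi)$,
\[
|Q^n\varphi(x)-\nu(\varphi)| \le m_\alpha(\varphi)\, W\big((Q^*)^n\delta_x,(Q^*)^n\nu\big) \le m_\alpha(\varphi)\, r^n\, W(\delta_x,\nu) \le m_\alpha(\varphi)\,\mathrm{diam}(E)^\alpha\, r^n,
\]
which is \eqref{vitesseExp} with $\rho=r$ and $\kappa=\mathrm{diam}(E)^\alpha\, m_\alpha(\varphi)$ (the constant case $m_\alpha(\varphi)=0$ being trivial). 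The only genuinely non-formal point, and hence the main obstacle, is the topological bookkeeping for $W$ --- verifying completeness of $(\mathcal P(E),W)$ and that it metrizes weak convergence --- which rests entirely on the compactness of $E$. Alternatively, the estimates $m_\alpha(Q^n\varphi)\le r^n m_\alpha(\varphi)$ and $\Vert Q^n\varphi\Vert_\infty\le\Vert\varphi\Vert_\infty$ combine into a Doeblin--Fortet (Lasota--Yorke) inequality $\Vert Q^n\varphi\Vert_\alpha\le \Vert\varphi\Vert_\infty + r^n m_\alpha(\varphi)$; together with the compact embedding $\mathcal H_\alpha(E)\hookrightarrow C(E)$ given by Arzel\`a--Ascoli, the Ionescu-Tulcea--Marinescu theorem makes $Q$ quasi-compact on $\mathcal H_\alpha(E)$, and an analysis of its peripheral spectrum delivers the same conclusion.
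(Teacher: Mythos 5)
Your argument is correct, but it follows a genuinely different route from the one this paper relies on. The paper does not reprove Theorem~\ref{muconstant}: it imports it from \cite{LP19}, whose method (and the method used later in this paper for Theorem~\ref{2DimGENERALDF}) is the quasi-compactness machinery --- a Doeblin--Fortet inequality $\Vert Q^n\varphi\Vert_\alpha \le \Vert\varphi\Vert_\infty + r^n m_\alpha(\varphi)$, the Ionescu-Tulcea--Marinescu/Hennion theorem, and then an analysis of the peripheral spectrum of $Q$ on $\mathcal H_\alpha(E)$ in the spirit of \cite{herve94}. You instead transfer the single estimate $m_\alpha(Q\varphi)\le r\,m_\alpha(\varphi)$ to the Hutchinson--Kantorovich metric $W$ on $\mathcal P(E)$ and invoke Banach's fixed point theorem; every step checks out (the seminorm contraction, the bound $|\mu_1(\varphi)-\mu_2(\varphi)|\le m_\alpha(\varphi)W(\mu_1,\mu_2)$ by homogeneity, the strict contraction of $Q^*$, and the rate with $\rho=r$), and the ``topological bookkeeping'' you flag --- that $W$ is finite, metrizes weak-$*$ convergence, and is complete --- is standard on a compact metric space via Arzel\`a--Ascoli and density of H\"older functions in $C(E)$. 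What your route buys is elementarity and an explicit rate $\rho=r$ with $\kappa=\mathrm{diam}(E)^\alpha\, m_\alpha(\varphi)\le \mathrm{diam}(E)^\alpha\Vert\varphi\Vert_\alpha$ (note that $\kappa$ necessarily scales with $\varphi$, which is the form actually used elsewhere in the paper, e.g.\ $\kappa\rho^n\Vert\varphi\Vert_\alpha$ in Theorem~\ref{2DimGENERALDF}); what it gives up is exactly what the quasi-compactness route is designed for, namely a spectral decomposition of $Q$ that survives when the peripheral spectrum is not reduced to the simple eigenvalue $1$ --- the situation arising for place-dependent weights with several absorbing sets, which is the main concern of Section~5. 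For the constant-weight case at hand your shortcut is entirely adequate, and your closing remark correctly identifies the paper's own alternative.
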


\noindent {\bf Application to the Diaconis and Freedman's chain for $p$ fixed in $\De_d.$}
We assume  $p_i(\x) = p_i$ for all $i=0,\ldots, d$. We put the Diaconis and Freedman's chain into the framework of  iterated random functions as follows.
For each $i=0,\ldots, d$ and $t\in [0,1]$, we set $H_i(t,\cdot): \De_d \to \De_d, \x \mapsto t\x + (1-t)\e_i$ the affine transformation; these functions $H_i(t, \cdot)$ belong to    the space $\mathbb L {\rm ip}(\De_d, \De_d)$ of Lipschitz continuous functions from $\De_d$ to $\De_d$, with Lipschitz coefficient $m(H_i(t, \cdot))=t$.  Then, we consider the probability measure $\mu$ on $\mathbb L {\rm ip}(\De_d, \De_d)$ defined   by
\bel{eq:im1}
\mu(dT) := \sum_{i=0}^d p_i \int_{0}^1 \de_{H_i(t,\cdot)}(dT) dt, 
\qe
where $\de_T$ is the Dirac mass at $T$. Eq.~\eqref{defn:Q} may be rewritten as
\[
\forall f\in \mathbb  L^{\infty}(\De_d,d\x), \forall \x \in \De_d, \quad Q f(\x) = \int_{\mathbb L {\rm ip}(\De_d, \De_d)} f(T(\x))\mu(dT).
\]
Hence, the Diaconis and Freedman's chain $(Z_n)_{n \geq 0}$ on $\De_d$  is generated by the {\it iterated function system}  $(T_n)_{n \geq 1}$ in the sense of Eq.~\eqref{eq:ifs}, where $(T_n)_{n \geq 1}$ be a sequence of i.i.d. random functions with  common distribution $\mu$ defined by Eq.~\eqref{eq:im1}.
 
\begin{thm}\label{thm:uniqueConstant}
If $p_i(\x) = p_i$ for all $i=0,\ldots, d$,  then the Diaconis and Freedman's chain in $\De_d$ admits  a unique $P$-invariant probability measure $\nu \in \P(\De_d)$. Furthermore, there exists constants $\kappa >0$ and $\rho \in (0, 1)$ such that
\begin{equation}
\forall \varphi\in  \mathcal   H_\alpha (\De_d),  \ \forall \x \in \De_d \quad  \vert Q^n\varphi(\x)-\nu(\varphi)\vert \leq \kappa \rho^n.
\end{equation}

\end{thm}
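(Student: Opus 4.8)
The plan is to reduce this statement to a direct application of Theorem~\ref{muconstant}, which already delivers both the uniqueness of the invariant probability measure and the exponential rate. The chain $(Z_n)_{n\ge 0}$ with constant weights $p_i(\x)=p_i$ has just been realized as the iterated function system generated by the i.i.d.\ maps $(T_n)_{n\ge 1}$ of common law $\mu$ from \eqref{eq:im1}, with transition operator $Q=P$. It therefore suffices to verify the contraction-in-mean condition \eqref{contraction} on $E=\De_d$ for this $\mu$ and some $\alpha\in(0,1]$; everything else then follows verbatim from Theorem~\ref{muconstant}, with $\nu$ the announced measure and the estimate \eqref{vitesseExp} transcribed for $Q=P$.

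The key point I would use is that each admissible map $H_i(t,\cdot)\colon\x\mapsto t\x+(1-t)\e_i$ is affine with linear part $t\,\mathrm{Id}$, so that
\[
H_i(t,\x)-H_i(t,\y)=t(\x-\y)\qquad\text{for all }\x,\y\in\De_d .
\]
For the metric $d$ inherited from a norm on $\R^d$ this gives $d(H_i(t,\x),H_i(t,\y))=t\,d(\x,\y)$ exactly, so the ratio in \eqref{contraction} equals $t$ for every index $i$ and every pair $\x\neq\y$, with no dependence on the base points. This is what makes the mean-contraction integral trivial to evaluate and uniform in $(\x,\y)$.

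Carrying out the computation, for any $\alpha\in(0,1]$ and any $\x\neq\y$,
\[
\int_{\mathbb L {\rm ip}(\De_d,\De_d)}\Bigl(\frac{d(T(\x),T(\y))}{d(\x,\y)}\Bigr)^\alpha\mu(dT)
=\sum_{i=0}^d p_i\int_0^1 t^\alpha\,dt
=\frac{1}{\alpha+1}\sum_{i=0}^d p_i=\frac{1}{\alpha+1},
\]
since $\sum_{i=0}^d p_i=1$. As this value is independent of $(\x,\y)$, the supremum $r$ in \eqref{contraction} equals $\tfrac{1}{\alpha+1}$, which is $<1$ for every $\alpha\in(0,1]$ (e.g.\ $r=\tfrac12$ at $\alpha=1$). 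Hence \eqref{contraction} holds and Theorem~\ref{muconstant} applies, yielding the unique $P$-invariant $\nu\in\P(\De_d)$ together with the claimed exponential convergence.

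I do not expect a genuine obstacle here: the argument hinges entirely on the contracting factor of each map being the scalar $t$, which renders the integral in \eqref{contraction} uniform in the base points and strictly less than $1$. The only point requiring a word of care is the choice of metric on $\De_d$, which should be norm-induced so that the affine maps scale distances exactly by $t$; any such metric works equally well and the conclusion is unchanged.
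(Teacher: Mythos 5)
Your proposal is correct and follows exactly the paper's own argument: realize the chain as the IFS with common law $\mu$ from \eqref{eq:im1}, note that each $H_i(t,\cdot)$ contracts distances by the exact factor $t$, compute $r=\sum_{i=0}^d p_i\int_0^1 t^\alpha\,dt=\tfrac{1}{1+\alpha}<1$, and invoke Theorem~\ref{muconstant}. No discrepancies to report.
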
 
\begin{proof} This is a direct consequence of Theorem \ref{muconstant} with 
\begin{align*}
r=\sup_{\stackrel{\x, \y \in \De_d}{\x\neq \y}}\int_{\mathbb L {\rm ip}(\De_d, \De_d)}\Bigl({ |T(\x)-T(\y)| \over |\x-
\y|}\Bigr)^\alpha \mu({\rm d}T)   
&\leq  \sum_{i=0}^d p_i \int_0^1  m(H_i(t,\cdot)) ^\alpha
{\rm d}t\\
&=
 \int_0^1 t^\alpha {\rm d}t={1\over 1+\alpha} <1.
\end{align*} 
\end{proof}

\begin{rem}
The  unique  $P$-invariant probability measure $\nu$ is  usually  nothing but the Dirichlet distribution $Dir[ \theta_0,\ldots, \theta_k ]$ as will be shown later. If $\ \theta_i >0$ for all $i=0,\ldots, k$ we have a unique invariant probability density which is the Dirichlet density. If else, the Dirichlet distribution is singular and can be understood in the sense of \cite[p. 211]{Ferguson73}, \cite[Definition 3.1.1, p. 89]{GR03}, or \cite[Definition 4.2]{JLLT19}.
\end{rem}

\subsection{Iterated function systems with  place dependent probabilities}

In this subsection, we  extend the measure $\mu$ to a collection
$(\mu_x)_{x \in E}$  of probability measures on $E$, depending
continuously on $x$. We consider the Markov chain $(X_n)_{n \geq
0}$ on $E$ whose  transition operator $Q$  is given by: for any
bounded  Borel function  $\varphi: E \to \mathbb C$
 and  any  $x\in E$,
$$
Q\varphi(x)= \int_{\mathbb L {\rm ip}(E, E)} \varphi(T(x)) \mu_x({\rm d}T).
$$
 First, we introduce  the following definition.

\begin{defn}
  A sequence  $(\xi_n)_{n \geq 0}$ of  continuous functions from $E$ to $E$ is  a contracting sequence  if there exist  $x_0 \in E$ such that
  $$
\forall x \in E \quad \lim_{n \to +\infty} \xi_n(x)= x_0.$$
\end{defn}
We cite the  following result in \cite[Proposition 2.2]{LP19}. 
\begin{thm}[\cite{LP19}]\label{muvarie}
Assume that there   exists $\alpha \in (0,1]$ such that
\begin{enumerate}
\item [{\bf H1.}]  $r:=\displaystyle \sup_{\stackrel{x, y \in E}{x\neq y}}\int_{\mathbb L{\rm ip}(E, E)}\Bigl({ d(T(x), T(y))\over d(x, y)}\Bigr)^\alpha\mu_x({\rm d}T) <1;
 $
\item [{\bf H2.}]   $R_\alpha:= \displaystyle
\sup_{\stackrel{x, y \in E}{x\neq y}}{\vert \mu_x-\mu_y\vert_{ TV } \over d (x, y)^\alpha}<+\infty,  \text{ where $\vert \mu_x-\mu_y\vert_{TV}$ is the total variation  distance between $\mu_x$ and $\mu_y$};
$
\item [
{\bf H3.}] There exist  $\delta >0$ and a  probability measure
$\mu$ on  $E$  such that
\begin{enumerate}
\item[(i)]
$\label{minoration}
\forall x \in E\qquad \mu_x\geq \delta \mu;
$
\item[(ii)] the closed  semi-group   $T_\mu$  generated by the support
  $S_\mu$  of $ \mu$ possesses a contracting sequence.
  \end{enumerate}
\end{enumerate}
Then, there exists   on $E$ a unique   $Q$-invariant probability measure    $\nu$; furthermore, for some constants $\kappa >0$ and $\rho \in (0,1)$, it holds
\begin{equation}\label{vitesseExpbis}
\forall \varphi \in \mathcal   H_\alpha(E), \ \forall x \in E \quad \vert Q^n\varphi(x)-\nu(\varphi)\vert \leq \kappa\rho^n.
\end{equation}
\end{thm}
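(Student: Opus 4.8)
The plan is to prove the theorem by showing that the restriction of $Q$ to the Banach space $\mathcal H_\alpha(E)$ is quasi-compact with peripheral spectrum reduced to the simple eigenvalue $1$. Once this is established, $Q$ decomposes as $Q=\Pi+N$, where $\Pi$ is the rank-one spectral projection at $1$, necessarily of the form $\Pi\varphi=\nu(\varphi)\mathbf 1$ for a probability measure $\nu$ (this is the invariant measure), $\Pi N=N\Pi=0$, and the spectral radius $\rho_0$ of $N$ is strictly less than $1$. The exponential estimate \eqref{vitesseExpbis} and the uniqueness of $\nu$ are then immediate consequences. The three hypotheses play complementary roles: \textbf{H1} and \textbf{H2} produce the quasi-compactness, while \textbf{H3} forces the peripheral spectrum to be trivial.

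First I would derive a Doeblin--Fortet (Ionescu-Tulcea--Marinescu) inequality on $\mathcal H_\alpha(E)$. For $x\ne y$, I split
\[
Q\varphi(x)-Q\varphi(y)=\int(\varphi(T(x))-\varphi(T(y)))\,\mu_x(dT)+\int\varphi(T(y))\,(\mu_x-\mu_y)(dT).
\]
The first integral is at most $m_\alpha(\varphi)\,d(x,y)^\alpha\int(d(T(x),T(y))/d(x,y))^\alpha\mu_x(dT)\le r\,m_\alpha(\varphi)\,d(x,y)^\alpha$ by \textbf{H1}, and the second is at most $\|\varphi\|_\infty\,|\mu_x-\mu_y|_{TV}\le R_\alpha\|\varphi\|_\infty\,d(x,y)^\alpha$ by \textbf{H2}. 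Since $\|Q\varphi\|_\infty\le\|\varphi\|_\infty$, iterating $m_\alpha(Q\varphi)\le r\,m_\alpha(\varphi)+R_\alpha\|\varphi\|_\infty$ yields
\[
m_\alpha(Q^n\varphi)\le r^n m_\alpha(\varphi)+\frac{R_\alpha}{1-r}\|\varphi\|_\infty,\qquad \|Q^n\varphi\|_\alpha\le r^n\|\varphi\|_\alpha+M\|\varphi\|_\infty,
\]
with $M=1+R_\alpha/(1-r)$. In particular the powers $Q^n$ are uniformly bounded on $\mathcal H_\alpha(E)$, so the spectral radius is at most $1$, and equals $1$ because $Q\mathbf 1=\mathbf 1$. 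As $E$ is compact, the inclusion $\mathcal H_\alpha(E)\hookrightarrow C(E)$ is compact by Arzel\`a--Ascoli, so the Ionescu-Tulcea--Marinescu theorem (see also Hennion's theorem) gives that the essential spectral radius is at most $r<1$; hence the peripheral spectrum consists of finitely many eigenvalues of modulus $1$, with finite-dimensional eigenspaces contained in $\mathcal H_\alpha(E)$.

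The decisive and most delicate step is to use \textbf{H3} to show this peripheral spectrum equals $\{1\}$ and that $1$ is simple. I would argue by positivity. Let $Qh=\lambda h$ with $|\lambda|=1$ and $h\in\mathcal H_\alpha(E)$. Then $|h|=|Qh|\le Q|h|$, and since $Q\mathbf 1=\mathbf 1$ the continuous function $|h|$ attains its maximum $M$ at some $x^\ast$ with $Q|h|(x^\ast)=M$; the minorization \textbf{H3}(i), $\mu_{x^\ast}\ge\delta\mu$, forces $|h(T(x^\ast))|=M$ for $\mu$-almost every $T$, hence for every $T\in S_\mu$ by continuity of $T\mapsto h(T(x^\ast))$ and the definition of the support. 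Propagating along $T_\mu$ and invoking the contracting sequence $\xi_n$ with $\xi_n(x)\to x_0$ for every $x$ shows $|h|\equiv M$ on $E$. Normalizing $M=1$, the modulus of the average $\int h(T(x))\mu_x(dT)=\lambda h(x)$ equals $1$, which is possible only if $h(T(x))=\lambda h(x)$ for $\mu_x$-a.e.\ $T$, hence for all $T\in S_\mu$; composing along $\xi_n$ gives $h(\xi_n(x))=\lambda^{l_n}h(x)$ for the appropriate exponents $l_n$. Letting $n\to\infty$ and using $\xi_n(x)\to x_0$ for two arbitrary points forces $h$ to be constant and $\lambda=1$. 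Thus $1$ is the unique peripheral eigenvalue and it is simple. I expect precisely this propagation argument---upgrading the almost-everywhere relations provided by the minorization to genuine identities on the accessible set supplied by the contracting sequence---to be the main obstacle, since it is where the measure-theoretic and topological inputs of \textbf{H3} must be fused.

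Finally, existence of $\nu$ follows either from the Feller property of $Q$ on the compact space $E$ (Krylov--Bogolyubov) or directly as the measure represented by the rank-one projection $\Pi$, and uniqueness follows from the simplicity of the eigenvalue $1$ together with the contraction property already recorded in Proposition~\ref{PP*}. With $Q=\Pi+N$, $\Pi\varphi=\nu(\varphi)\mathbf 1$ and spectral radius $\rho_0<1$ of $N$, for any $\rho\in(\rho_0,1)$ there is $\kappa>0$ with
\[
\|Q^n\varphi-\nu(\varphi)\mathbf 1\|_\alpha=\|N^n\varphi\|_\alpha\le\kappa\rho^n\|\varphi\|_\alpha,
\]
and evaluating at any $x\in E$ gives \eqref{vitesseExpbis}, completing the proof.
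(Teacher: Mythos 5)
The paper itself does not prove this statement: it is quoted verbatim from \cite[Proposition 2.2]{LP19}, and your overall strategy (a Doeblin--Fortet inequality from \textbf{H1}--\textbf{H2}, quasi-compactness via Ionescu-Tulcea--Marinescu/Hennion, then \textbf{H3} to trivialize the peripheral spectrum) is exactly the methodology of that reference and of this paper. Your first two steps are correct. The genuine gap is in the peripheral-spectrum step: from $Q|h|(x^\ast)=M$ at a maximizer $x^\ast$ of $|h|$ you may legitimately conclude that $|h|=M$ on the closure of the forward orbit $\{\xi(x^\ast):\xi\in T_\mu\}$ (in particular at $x_0$), but this does \emph{not} give $|h|\equiv M$ on all of $E$: propagation only moves forward along the maps, and nothing in \textbf{H1}--\textbf{H3} makes every point of $E$ accessible from $x^\ast$. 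Since your next step (equality in the triangle inequality for $\int h(T(x))\mu_x(dT)=\lambda h(x)$ at \emph{every} $x$) requires $|h(x)|=1$ for every $x$, the argument as written does not close.

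The standard repair keeps your ideas but localizes them. Let $F=\{x:|h(x)|=M\}$; the equality case of the triangle inequality together with \textbf{H3}(i) shows that $T(x)\in F$ and $h(T(x))=\lambda h(x)$ for all $T\in S_\mu$ and $x\in F$; hence $F$ is stable under the closed semigroup $T_\mu$, contains $x_0=\lim_n\xi_n(x)$, and your $\lambda^{l_n}$ computation applied to two points \emph{of $F$} gives that $h$ is constant on $F$ and, evaluating at $x_0\in F$, that $\lambda=1$. One must then still prove $\ker(Q-I)=\mathbb C\,\mathbf 1$: for a real continuous $g$ with $Qg=g$, apply the same stability argument to both the set where $g$ attains its maximum and the set where it attains its minimum; each is closed, nonempty and $T_\mu$-stable, hence each contains $x_0$, so $\max g=g(x_0)=\min g$ and $g$ is constant. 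With these two steps the decomposition $Q\varphi=\nu(\varphi)\mathbf 1+N\varphi$ with $\rho(N)<1$ and the estimate \eqref{vitesseExpbis} follow as you describe. Two minor points: uniqueness of $\nu$ follows from \eqref{vitesseExpbis} and the density of $\mathcal H_\alpha(E)$ in $C(E)$, not from Proposition~\ref{PP*}, which concerns the dual operator of the specific simplex chain in a different function space; and the continuity of $T\mapsto h(T(x))$ needed to pass from ``$\mu$-almost every $T$'' to ``all $T\in S_\mu$'' should be stated relative to the topology on $\mathbb L{\rm ip}(E,E)$ with respect to which the support $S_\mu$ is defined.
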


{Let us now apply this statement to the Diaconis and Freedman's chain on $\Delta_d$:  for each $\x\in \De_d$, we define a space-dependent probability measure $\mu_{\x}\in \P(X)$ by
\[
\mu_{\x}(dT) := \sum_{i=0}^d p_i(\x) \int_{0}^1 \de_{H_i(t,\cdot)}(dT) dt, 
\]
where $\de_T$ is the Dirac mass at $T$. With this collection $(\mu_{\x})_{{\bf x} \in \Delta_d}$ of probability measures,  the Diaconis and Freedman's chain falls within the scope of iterated function systems with spacial dependent increments probabilities.  }
\begin{thm}\label{thm:uniqueDepend}
{Assume that 

(1)  for all $j=0,\ldots, d$,  the  functions $p_j$  belong to  $\mathcal   H_\alpha (\De_d)$;

(2)  there exists $i\in \{0,\ldots, d\}$ such that $\delta_i:=\min_{\x \in \De_d}p_i(\x) > 0$.}

\noindent Then, the Diaconis and Freedman's chain in $\De_d$ has a unique $P$-invariant probability measure $\nu \in \P(\De_d)$. Furthermore, there exist constants $\kappa >0$ and $\rho \in (0, 1)$ such that
\begin{equation}
\forall \varphi\in  \mathcal   H_\alpha (\De_d),  \ \forall \x \in \De_d \quad  \vert P^n\varphi(\x)-\nu(\varphi)\vert \leq \kappa \rho^n.
\end{equation}
\end{thm}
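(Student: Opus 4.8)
The plan is to verify that the space-dependent family $(\mu_{\x})_{\x \in \De_d}$ meets the three hypotheses \textbf{H1}, \textbf{H2}, \textbf{H3} of Theorem~\ref{muvarie}; once this is done, the existence and uniqueness of $\nu \in \P(\De_d)$ together with the geometric convergence rate \eqref{vitesseExpbis} follow at once, taking $Q = P$. The single structural fact I would use throughout is that each $H_i(t,\cdot)$ is affine with linear part $t\,\mathrm{Id}$, so that $d(H_i(t,\x),H_i(t,\y)) = t\,d(\x,\y)$ for every $i$ and every pair $\x,\y$. Because of this, \textbf{H1} is immediate and uses neither assumption: for all $\x \neq \y$,
\[
\int_{\mathbb L {\rm ip}(\De_d,\De_d)} \Bigl(\frac{d(T\x,T\y)}{d(\x,\y)}\Bigr)^\alpha \mu_{\x}(dT) = \sum_{i=0}^d p_i(\x)\int_0^1 t^\alpha\,dt = \frac{1}{1+\alpha},
\]
where I used $\sum_{i=0}^d p_i(\x)=1$. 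Hence $r = 1/(1+\alpha) < 1$ for any $\alpha \in (0,1]$, uniformly in $\x,\y$.

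For \textbf{H2} I would introduce the probability measures $\lambda_i := \int_0^1 \de_{H_i(t,\cdot)}(dT)\,dt$ on $\mathbb L {\rm ip}(\De_d,\De_d)$, each supported on the arc $\Gamma_i := \{H_i(t,\cdot): t \in [0,1]\}$, so that $\mu_{\x} = \sum_{i=0}^d p_i(\x)\,\lambda_i$. The crucial observation is that the $\lambda_i$ are pairwise mutually singular: comparing linear parts and translations of affine maps shows $\Gamma_i \cap \Gamma_j = \{\mathrm{Id}\}$ for $i \neq j$, and since $\lambda_i$ is the pushforward of Lebesgue measure on $[0,1]$ under the injective (on $[0,1)$) map $t \mapsto H_i(t,\cdot)$, the point $\mathrm{Id}=H_i(1,\cdot)$ is $\lambda_i$-null. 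Consequently $\vert \mu_{\x}-\mu_{\y}\vert_{TV} = \sum_{i=0}^d |p_i(\x)-p_i(\y)|$, and assumption~(1) gives $|p_i(\x)-p_i(\y)| \le m_\alpha(p_i)\,d(\x,\y)^\alpha$, whence $R_\alpha \le \sum_{i=0}^d m_\alpha(p_i) < \infty$.

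For \textbf{H3} I would invoke assumption~(2): fix the index $i$ with $\delta_i = \min_{\x}p_i(\x) > 0$ and set $\mu := \lambda_i$ and $\delta := \delta_i$. The minorization (i) is then clear, since $\mu_{\x} = \sum_{j} p_j(\x)\lambda_j \ge p_i(\x)\lambda_i \ge \delta_i\,\lambda_i = \delta\mu$ for every $\x$. For (ii), the support of $\mu$ is $S_\mu = \Gamma_i$; fixing any $t_0 \in (0,1)$, the iterates $\xi_n := H_i(t_0,\cdot)^{\circ n}$ lie in the closed semigroup $T_\mu$ and satisfy $\xi_n(\x) = t_0^{\,n}\x + (1-t_0^{\,n})\e_i \to \e_i$ for every $\x \in \De_d$, so $(\xi_n)_{n\ge 0}$ is a contracting sequence with limit point $\e_i$. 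Thus all of \textbf{H1}--\textbf{H3} hold and Theorem~\ref{muvarie} applies.

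The genuinely delicate point is the total-variation identity in \textbf{H2}: everything reduces to justifying that the measures $\lambda_i$ on the (infinite-dimensional) function space $\mathbb L {\rm ip}(\De_d,\De_d)$ are pairwise mutually singular, which in turn rests on the two elementary but necessary facts that distinct arcs $\Gamma_i,\Gamma_j$ meet only at $\mathrm{Id}$ and that $\mathrm{Id}$ carries no $\lambda_i$-mass. Once this is established the Hölder bound from assumption~(1) makes $R_\alpha$ finite with essentially no further work; \textbf{H1} is automatic from the affine structure and \textbf{H3} follows directly from the positivity assumption~(2) together with the obvious contraction of the iterated maps toward $\e_i$.
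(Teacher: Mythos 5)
Your proposal is correct and follows essentially the same route as the paper: verify \textbf{H1}--\textbf{H3} for the family $(\mu_{\x})_{\x\in\De_d}$ and invoke Theorem~\ref{muvarie}. The only (harmless) differences are that you prove the exact total-variation identity via mutual singularity of the $\lambda_i$ where the paper just uses the triangle-inequality upper bound $\vert\mu_{\x}-\mu_{\y}\vert_{TV}\le\sum_i\vert p_i(\x)-p_i(\y)\vert$, which is all \textbf{H2} requires, and that for \textbf{H3}(ii) you iterate $H_i(t_0,\cdot)$ whereas the paper simply notes the constant map $H_i(0,\cdot)$ lies in the support of $\mu$.
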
 
\begin{proof}
{This is a direct consequence of   Theorem~\ref{muvarie} since   conditions $H1.-H3.$ hold in this context. }
\begin{enumerate}
\item[H1.] For any $\x \ne \y \in \De_d$: 
\[
\int_{\mathbb L {\rm ip}(\De_d, \De_d)} \Bigg(\frac{|T(\x)-T(\y)|}{|\x-\y|}\Bigg)^{\al} \mu_{\x}(dT) = \sum_{i=0}^d p_i(\x) \int_0^1  \Big(\frac{H_i(t,\x)-H_i(t,\y)}{\x-\y}\Big)^{\al} dt = \frac{1}{1+\al} < 1;
\] 
\item[H2.] For any $\x \ne \y \in \De_d$ and { any Borel set $A \subseteq \Delta_d$} , 
\[
\frac{|\mu_{\x}(A)-\mu_{\y}(A)|}{|\x-\y|^{\al}} \le  \sum_{i=0}^d \frac{|p_i(\x)-p_i(\y)|}{|\x-\y|^{\al}} \int_0^1 H_i(t,\x)(A) dt \le \sum_{i=0}^d m_{\al}(p_i) < \infty.
\] 
\item[H3.] Set $\mu(dT) := \int_0^1 \de_{H_i(t,\cdot)}(dT) dt \in \P(X)$; it holds  $\mu_{\x}(dT) \ge p_i(\x) \int_{0}^1 \de_{H_i(t,\cdot)}(dT) dt \ge \de \mu(dT)$ for all $\x\in \De_d$. Moreover, the constant function $\x\mapsto 0$ belongs to the support of $\mu$ so that the semigroup $T_{\mu}$ contains a contracting sequence with limit point $0$. 
\end{enumerate}
\end{proof}

\section{Some explicit invariant probability densities}
In this section we consider some special cases of weights    for which it is possible  to compute explicitly  the unique invariant probability density. {When ${d}=1$, it has been known that, when both conditions  $p_1(0) > 0$ and $p_0(1)>0$ hold,    there exists a unique invariant probability density of $(Z_n)_{n\ge 0}$ given by
\[
g_{\infty}(y) = C\exp \Bigg(\int_{1/2}^y \frac{p_1(t)}{1-t} dt - \int_{1/2}^y \frac{p_0(t)}{t} dt \Bigg).
\]  
See for instance \cite{RL10}
 or  \cite{LP19}.}
 We do not get  such general result when $d \geq 2$, we can do it only in some specific cases.   We would also like to emphasize that in \cite{MV2020}, based on Sethuraman's construction of the Dirichlet distributions (see, \cite{Sethuraman1994}), the authors also gave out general results of the explicit formula of the stationary density in these special cases. Our approach is, however, very naturally and worth to be taken into account. 
\subsection{The case of constant weights}
We first consider the case of constant weights, i.e., $p_i(\x) = p_i>0$ for all $i=0,\ldots, d$. 
\begin{thm}\label{thm:constant}
If $p_i(\x)=p_i>0$ for all $i=0,\ldots,d$ then the unique invariant probability density $ g_\infty $ of $(Z_n)_{n\ge 0}$ is the density  of the Dirichlet distribution $Dir[p_0,\ldots,p_d]$, i.e.
\begin{equation}\label{dirichlet}
g_\infty (\y) = \frac{1}{\prod_{i=0}^d \Gamma(p_i)} \prod_{i=0}^d y_i^{p_i-1}  \dv_{\mathring \De_d }(\y)
\end{equation}
where $y_0:=1-y_1-\cdots-y_d$.
\end{thm}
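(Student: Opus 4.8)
The plan is to exploit the machinery already in place so that only a verification remains. By Theorem~\ref{thm:uniqueConstant}, in the constant-weight case the chain $(Z_n)_{n\ge 0}$ possesses a \emph{unique} $P$-invariant probability measure $\nu\in\P(\De_d)$, and by Corollary~\ref{cor:uniqueness} an invariant density, if it exists, is unique. Consequently it suffices to exhibit one invariant density, i.e. to check directly that the candidate $g_\infty$ of Eq.~\eqref{dirichlet} is a fixed point of the dual operator: $P^*g_\infty=g_\infty$. Since $P^*$ maps $Den(\De_d)$ into itself, and $g_\infty$ is a genuine probability density precisely because every $p_i>0$ (the stated normalizing constant $\prod_i\Gamma(p_i)^{-1}$ is the correct Dirichlet constant here because $\sum_{i=0}^d p_i=1$ forces $\Gamma(\sum_i p_i)=\Gamma(1)=1$), this identification will give the theorem.

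To carry out the verification I would insert $g_\infty$ into the explicit formula for $P^*$ from Eq.~\eqref{defn:P*}, using the second (variable $s$) form
\[
P^*g_\infty(\y)=\sum_{i=0}^d\int_1^{\frac{1}{1-y_i}} s^{d-2}\,G_i\!\big(s\y+(1-s)\e_i\big)\,ds,\qquad G_i=p_i\,g_\infty .
\]
The key preliminary computation is to express $G_i$ at the point $\z=s\y+(1-s)\e_i$ in barycentric coordinates: one has $z_j=s\,y_j$ for $j\neq i$ and $z_i=s\,y_i+(1-s)=1-s(1-y_i)$. Substituting into $g_\infty$ and collecting the powers of $s$, the exponent of $s$ coming from $\prod_{j\neq i}(s y_j)^{p_j-1}$ is $\sum_{j\neq i}(p_j-1)=(1-p_i)-d$, where the constraint $\sum_j p_j=1$ is used in an essential way. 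Combining with the prefactor $s^{d-2}$ collapses the total power of $s$ to $s^{-1-p_i}$, leaving
\[
P^*g_\infty(\y)=\sum_{i=0}^d c\,p_i\Big(\prod_{j\neq i}y_j^{p_j-1}\Big)\int_1^{\frac{1}{1-y_i}}\big(1-s(1-y_i)\big)^{p_i-1}s^{-1-p_i}\,ds,
\]
with $c=\prod_i\Gamma(p_i)^{-1}$.

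The heart of the argument is evaluating the inner integral $I_i$. I would use the substitution $u=\tfrac1s-(1-y_i)$, so that $1-s(1-y_i)=su$, $ds=-s^2\,du$, and the limits $s=1,\ \frac{1}{1-y_i}$ become $u=y_i,\ 0$. All powers of $s$ cancel and one gets $I_i=\int_0^{y_i}u^{p_i-1}\,du=y_i^{p_i}/p_i$. Plugging this back, each summand becomes $c\,y_i\prod_{j=0}^d y_j^{p_j-1}$, and summing over $i$ and using $\sum_{i=0}^d y_i=1$ yields exactly $c\prod_{j=0}^d y_j^{p_j-1}=g_\infty(\y)$. I expect the only delicate points to be bookkeeping: correctly reading off the barycentric coordinates of $\z$, and verifying that the two applications of $\sum_i p_i=1$ (once to collapse the $s$-exponent, once to collapse the final sum over vertices) indeed conspire to reproduce $g_\infty$; the integral itself is routine once the substitution $u=\tfrac1s-(1-y_i)$ is chosen. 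No growth or integrability subtleties arise since every $p_i>0$ keeps each $I_i$ finite.
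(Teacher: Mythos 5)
Your proposal is correct and takes essentially the same approach as the paper: insert the Dirichlet density into the explicit formula \eqref{defn:P*} for $P^*$, use the constraint $\sum_{i=0}^d p_i=1$ to collapse the powers of $s$, evaluate the resulting one-dimensional integral, sum over the vertices using $\sum_i y_i = 1$, and invoke Corollary~\ref{cor:uniqueness} for uniqueness. The only difference is computational: your substitution $u=\tfrac{1}{s}-(1-y_i)$ reduces the inner integral directly to $\int_0^{y_i}u^{p_i-1}\,du=y_i^{p_i}/p_i$, whereas the paper substitutes $t=sy_i+1-s$ and then separately verifies the identity $(1+\al_i)\int_0^{y_i}t^{\al_i}(1-t)^{-2-\al_i}\,dt=y_i^{1+\al_i}(1-y_i)^{-1-\al_i}$; both are valid.
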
\
\begin{proof}
 {It suffices to prove } that $g_\infty(\y)=\prod_{i=0}^d y_i^{\al_i}$ with $\al_i=p_i-1$ is the unique solution of the equation $P^*g(\y) = g(\y)$. Indeed,
 \begin{align}\label{zlehf}
P^*g_\infty(\y) &= \sum_{i=0}^d p_i \int_{1}^{\frac{1}{1-y_i}} s^{d-2} g_\infty\Bigg(s\y + \Big(1-s\Big)\e_i\Bigg) ds\notag \\
&= \sum_{i=0}^d p_i \int_{1}^{\frac{1}{1-y_i}} s^{d-2} (sy_i+1-s)^{\al_i} \prod_{j\ne i}(sy_j)^{\al_j} ds\notag\\
&= \sum_{i=0}^d p_i \int_{1}^{\frac{1}{1-y_i}} s^{d-2+\sum_{j\ne i}\al_j} (sy_i+1-s)^{\al_i}  \frac{g_\infty(\y)}{y_i^{\al_i}}ds\notag \\
&=  \sum_{i=0}^d {p_i \over(1-y_i) y_i^{\al_i}(1-y_i)^{-2-\al_i}} \left(\int_{0}^{y_i}t^{\al_i}(1-t)^{-2-\al_i}dt \right) g_\infty(\y )
\end{align}
{  where the  last equality  follows by using the change of variables $t = sy_i+1-s$ and the equality  $\sum_{j=0}^d \al_j =
  -d$. 
  Notice that,   for  $i=0, \ldots, d$ and $\al_i>-1$,
\[
(1+\al_i)\int_{0}^{y_i}t^{\al_i}(1-t)^{-2-\al_i} dt = y_i^{1+\al_i}(1-y_i)^{-1-\al_i}.
\]
}
As a matter of fact,   the function $ F: y_i\mapsto (1+\al_i)\int_{0}^{y_i}t^{\al_i}(1-t)^{-2-\al_i} dt - y_i^{1+\al_i}(1-y_i)^{-1-\al_i}$ satisfies  $F(0)=0$ and $F'(y_i) = 0$ ; therefore $F(y_i) \equiv 0$. 
Hence, equality (\ref{zlehf}) yields 
\[
P^*g_\infty(\y) 
 = \sum_{i=0}^d  y_i g_\infty(\y) =g_\infty(\y).
\]
The uniqueness stems  from Corollary~\ref{cor:uniqueness}.
\end{proof}

\begin{ex}
When $d=2$, $p_1= p_2= p_0=1/3$, the unique invariant probability density is  \[
g_{\infty}(\y) = \frac{1}{\Gamma(1/3)^3} y_1^{-\frac{2}{3}} y_2^{-\frac{2}{3}}(1-y_1-y_2)^{-\frac{2}{3}} \dv_{\De_2^o}(\y). 
\qquad \text{(see Figure~\ref{Fig:DirichletDistribution})}\]

\begin{figure}[h!]
\begin{center}
	\includegraphics[width=45mm]{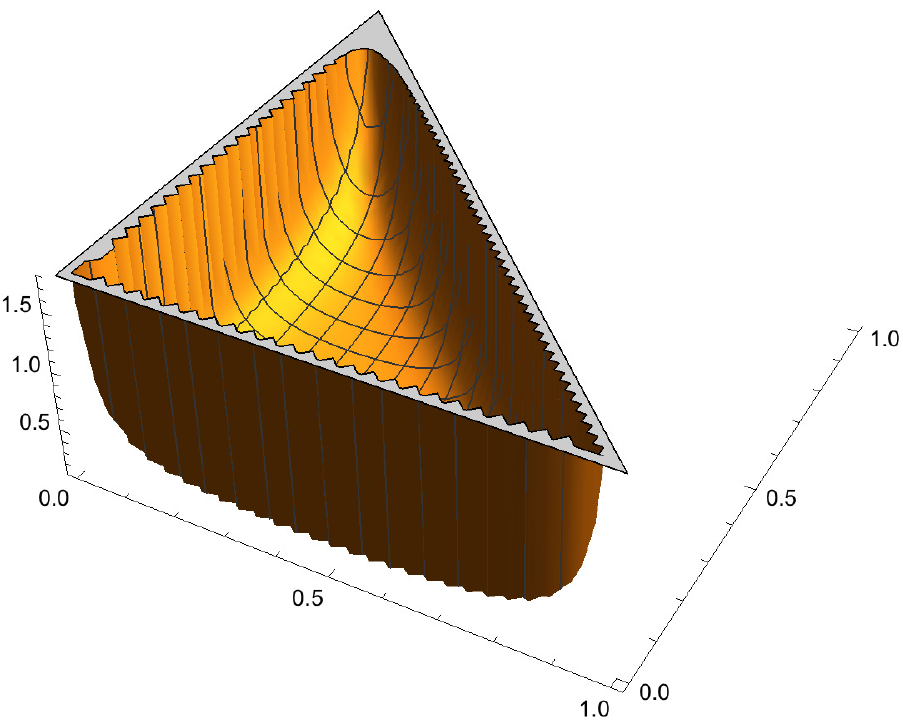}
	\end{center}
\caption{The density function of the Dirichlet distribution $Dir[\frac{1}{3},\frac{1}{3},\frac{1}{3}]$.}
\label{Fig:DirichletDistribution}
\end{figure}
\end{ex}
\noindent Figure~\ref{Fig:1} represents random trajectories of $(Z_n)_{n\ge 0}$

- in $[0,1]$,  starting at $x_0=0.6$ with $p_1(x)=0.2, p_0(\x) = 0.8$, in the left panel;

 - in $\De_2$,  starting at $\x_0=(0.3,0.4)$ with $p_1(\x) = 0.5 , p_2(\x)= 0.2, p_0(\x)=0.3$ in the right panel. 
 
 \noindent They both illustrate the ergodic behavior of the chain. 
\begin{figure}
\begin{center}
	\includegraphics[width=50mm]{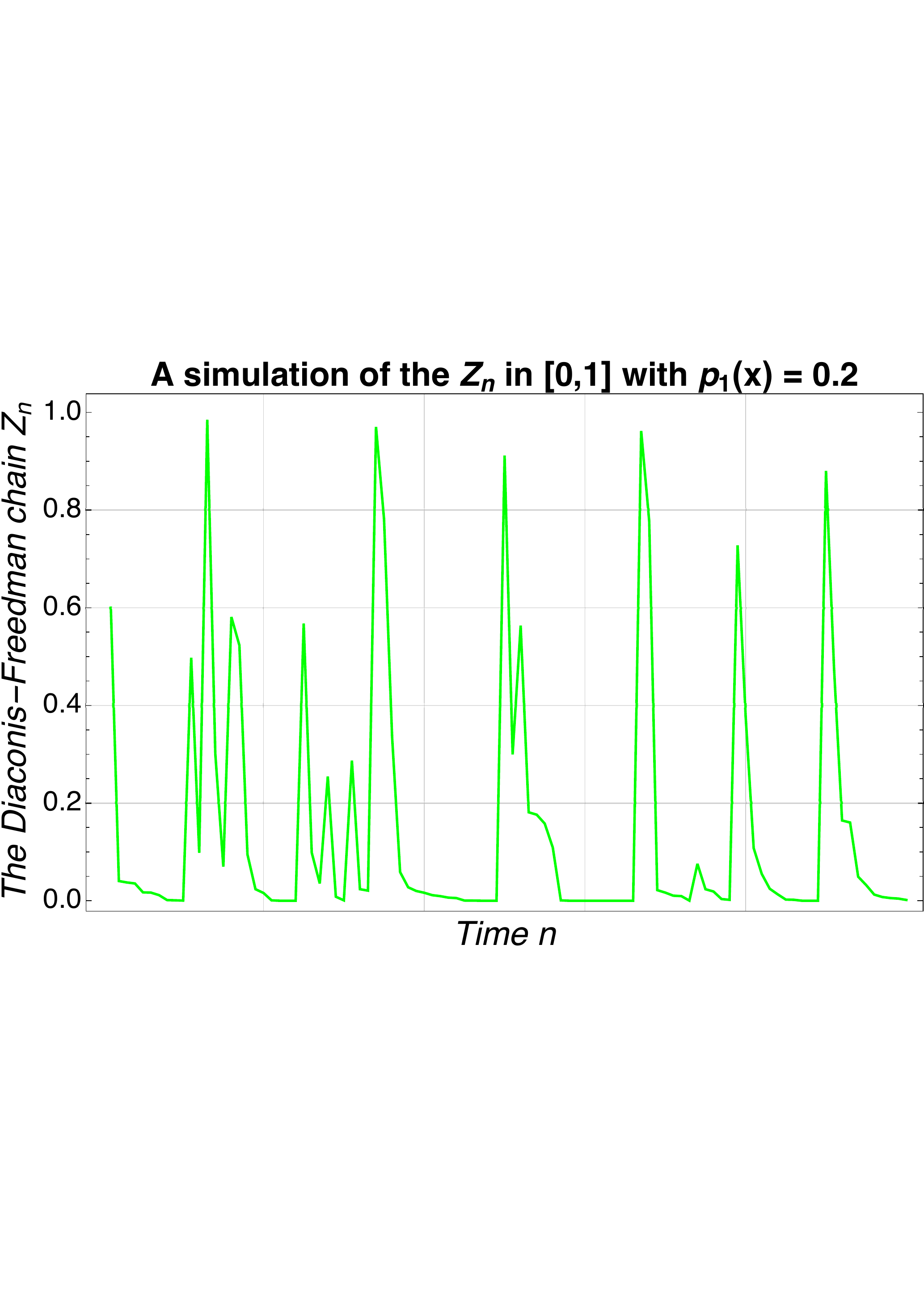}
	 \qquad  \qquad  \qquad 
	 \includegraphics[width=50mm]{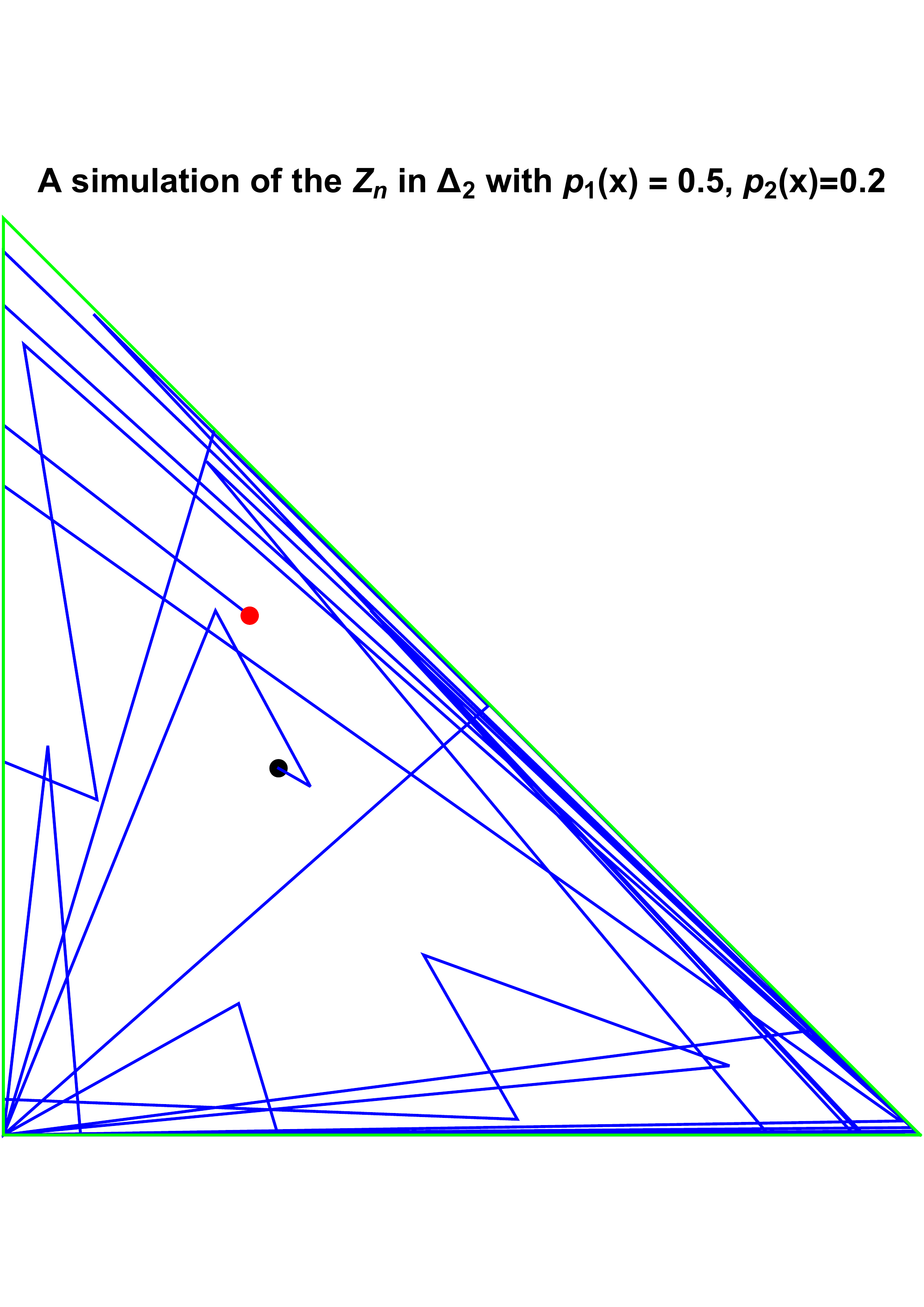}
	\end{center}
\caption{Left: Random trajectory of $Z_n$ in $[0,1]$ starting at $x_0=0.6$;
Right: Random trajectory of $Z_n$ in $\De_2$ starting at $\x_0=(0.3,0.4)$.}
\label{Fig:1}
\end{figure}

\newpage

\subsection{The case of affine  weights}
In this section, we present a  special case  of non constant weight functions $p_i(\y)$ which yield to the explicit form of the unique  invariant density function.
\begin{thm}\label{thm:linear}
Fix positive constants $\theta_0, \ldots, \theta_d  > 0$ with $|\tev| = \theta_0 +\cdots +\theta_d \le 1$ and assume that, for  any $i=1, \ldots, d$ and   ${\bf y}= (y_1, \ldots, y_d)$ in $\Delta_d$, 
\[p_i(\y) =\tilde p_i(y_i):= \theta_i +(1-|\tev| ) y_i\]
 (which implicitly implies $p_0(\y) =  \tilde p_0(y_0):=\theta_0 + (1-|\tev|) y_0$).
 Then, the unique invariant probability density $g_\infty$ of $(Z_n)_{n\ge 0}$ is the Dirichlet distribution $Dir[\theta_0,\ldots,\theta_d]$ given by  
\[
g_\infty(\y) = Dir[\theta_0,\ldots,\theta_d](\y) = \frac{\Gamma(|\tev|)}{\prod_{i=0}^d \Gamma(\theta_i)}\prod_{i=0}^d y_i^{\theta_i-1}\dv_{\mathring{\De}_d}(\y).
\]
\end{thm}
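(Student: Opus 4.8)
The plan is to repeat, for the affine weights, the scheme used in the proof of Theorem~\ref{thm:constant}: the claimed function $g_\infty$ is manifestly a genuine probability density on $\De_d$ — it is the Dirichlet density $Dir[\theta_0,\ldots,\theta_d]$, whose normalizing constant $\Gamma(|\tev|)/\prod_i\Gamma(\theta_i)$ makes $\int_{\De_d}g_\infty=1$ — so by Corollary~\ref{cor:uniqueness} the whole statement reduces to verifying the single fixed-point identity $P^*g_\infty=g_\infty$. Before that I would record that the weights are admissible: summing $\tilde p_i(y_i)=\theta_i+(1-|\tev|)y_i$ over $i=0,\ldots,d$ and using $\sum_{i=0}^d y_i=1$ gives $\sum_i p_i(\y)=|\tev|+(1-|\tev|)=1$, and each $p_i(\y)\ge\theta_i>0$; in particular the hypotheses of Theorem~\ref{thm:uniqueDepend} hold (the $p_i$ are affine, hence Lipschitz, and bounded below), so the chain does possess a unique invariant probability measure, which the computation below identifies with $g_\infty\,d\y$.

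Next I would compute $P^*g_\infty$ from formula~\eqref{defn:P*} with $G_i(\y)=g_\infty(\y)\,p_i(\y)$. Writing $\z=s\y+(1-s)\e_i$ in barycentric coordinates gives $z_j=sy_j$ for $j\ne i$ and $z_i=sy_i+1-s$, so that
\[
g_\infty(\z)=C\,(sy_i+1-s)^{\theta_i-1}\prod_{j\ne i}(sy_j)^{\theta_j-1},
\qquad C=\frac{\Gamma(|\tev|)}{\prod_{k=0}^d\Gamma(\theta_k)}.
\]
Collecting the powers of $s$ (the factor $s^{d-2}$ from~\eqref{defn:P*} together with $s^{\sum_{j\ne i}(\theta_j-1)}=s^{\,|\tev|-\theta_i-d}$ produces $s^{\,|\tev|-\theta_i-2}$) and pulling the constant $\prod_{j\ne i}y_j^{\theta_j-1}=g_\infty(\y)/\bigl(C\,y_i^{\theta_i-1}\bigr)$ out of the integral, the $i$-th summand of $P^*g_\infty(\y)$ becomes
\[
\frac{g_\infty(\y)}{y_i^{\theta_i-1}}\int_1^{\frac{1}{1-y_i}} s^{\,|\tev|-\theta_i-2}\,(1-s(1-y_i))^{\theta_i-1}\bigl[\theta_i+(1-|\tev|)(1-s(1-y_i))\bigr]\,ds.
\]
The substitution $t=sy_i+1-s=1-s(1-y_i)$, already used in Theorem~\ref{thm:constant}, sends $s=1\mapsto t=y_i$ and $s=\tfrac1{1-y_i}\mapsto t=0$ and turns this summand into
\[
\frac{g_\infty(\y)}{y_i^{\theta_i-1}(1-y_i)^{|\tev|-\theta_i-1}}\int_0^{y_i} t^{\theta_i-1}(1-t)^{|\tev|-\theta_i-2}\bigl[\theta_i+(1-|\tev|)t\bigr]\,dt.
\]

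The heart of the argument — playing the role of the identity $F\equiv0$ in the constant-weight proof — is that the integrand is an exact derivative: setting $\phi_i(t):=t^{\theta_i}(1-t)^{|\tev|-\theta_i-1}$, a direct differentiation gives $\phi_i'(t)=t^{\theta_i-1}(1-t)^{|\tev|-\theta_i-2}[\theta_i+(1-|\tev|)t]$, where the affine term $(1-|\tev|)t$ inherited from $p_i$ is precisely what is needed to close up the derivative. Since $\theta_i>0$ kills the boundary contribution at $0$, the integral equals $\phi_i(y_i)=y_i^{\theta_i}(1-y_i)^{|\tev|-\theta_i-1}$, and the $i$-th summand collapses to $y_i\,g_\infty(\y)$. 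Summing over $i$ and using $\sum_{i=0}^d y_i=1$ yields $P^*g_\infty=g_\infty$, and uniqueness follows from Corollary~\ref{cor:uniqueness}. I do not anticipate a real obstacle; the only care needed is the bookkeeping of the $s$-exponent in the change of variables and the verification that the correction term in $p_i$ matches $\phi_i'$ exactly. As a consistency check, when $|\tev|=1$ the weights are constant and the whole computation degenerates into the proof of Theorem~\ref{thm:constant}.
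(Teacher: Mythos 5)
Your proposal is correct and follows essentially the same route as the paper: the paper's proof likewise reduces, via the computation of Theorem~\ref{thm:constant}, to the single identity $\int_0^{y_i} p_i(t)\,t^{\theta_i-1}(1-t)^{|\tev|-\theta_i-2}\,dt = y_i^{\theta_i}(1-y_i)^{|\tev|-\theta_i-1}$, which it leaves as ``a direct calculation'' and which you verify explicitly by exhibiting the antiderivative $\phi_i(t)=t^{\theta_i}(1-t)^{|\tev|-\theta_i-1}$. Your bookkeeping of the $s$-exponent, the change of variables, and the boundary term at $0$ are all accurate, so the proposal is a correct, slightly more detailed rendering of the paper's argument.
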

\begin{proof}
Using the same techniques as in the proof of Theorem~\ref{thm:constant}, we only need to check that
\[
\frac{\int_0^{y_i} p_i(t) t^{\al_i} (1-t)^{|\alv| -\al_i +d-2}dt }{y_i^{\al_i} (1-y_i)^{|\alv| -\al_i +d-1}} = y_i,
\]
which can be   easily done by a direct  calculation. It completes the proof.
\end{proof}
\begin{rem}
\begin{enumerate}
\item[(i)] The case when  $|\tev|=1$  corresponds to constant weights.
\item[(ii)] This result has a very closed connection to results studied in Wright-Fisher models with mutations (see  for instance \cite{THJ15a}, \cite{THJ15b}, \cite{HJT2017}).
\end{enumerate}
\end{rem}

In Figure~\ref{Fig:2} we simulate random trajectories of 
 $(Z_n)_{n\ge 0}$ 
  
  - in $[0,1]$,  starting at $x_0=0.6$ with $p_1(x)=x, p_0(x) = 1-x$ in the left panel;
  
- in $\De_2$ starting at $\x_0=(0.3,0.4)$ with $p_1(\x) = x_1 , p_2(\x)= x_2, p_0(\x)=1-x_1-x_2$ in the right panel.  They both illustrate the absorbing behavior of the chain.
\begin{figure}[h!]
\begin{center}
	\includegraphics[width=6cm]{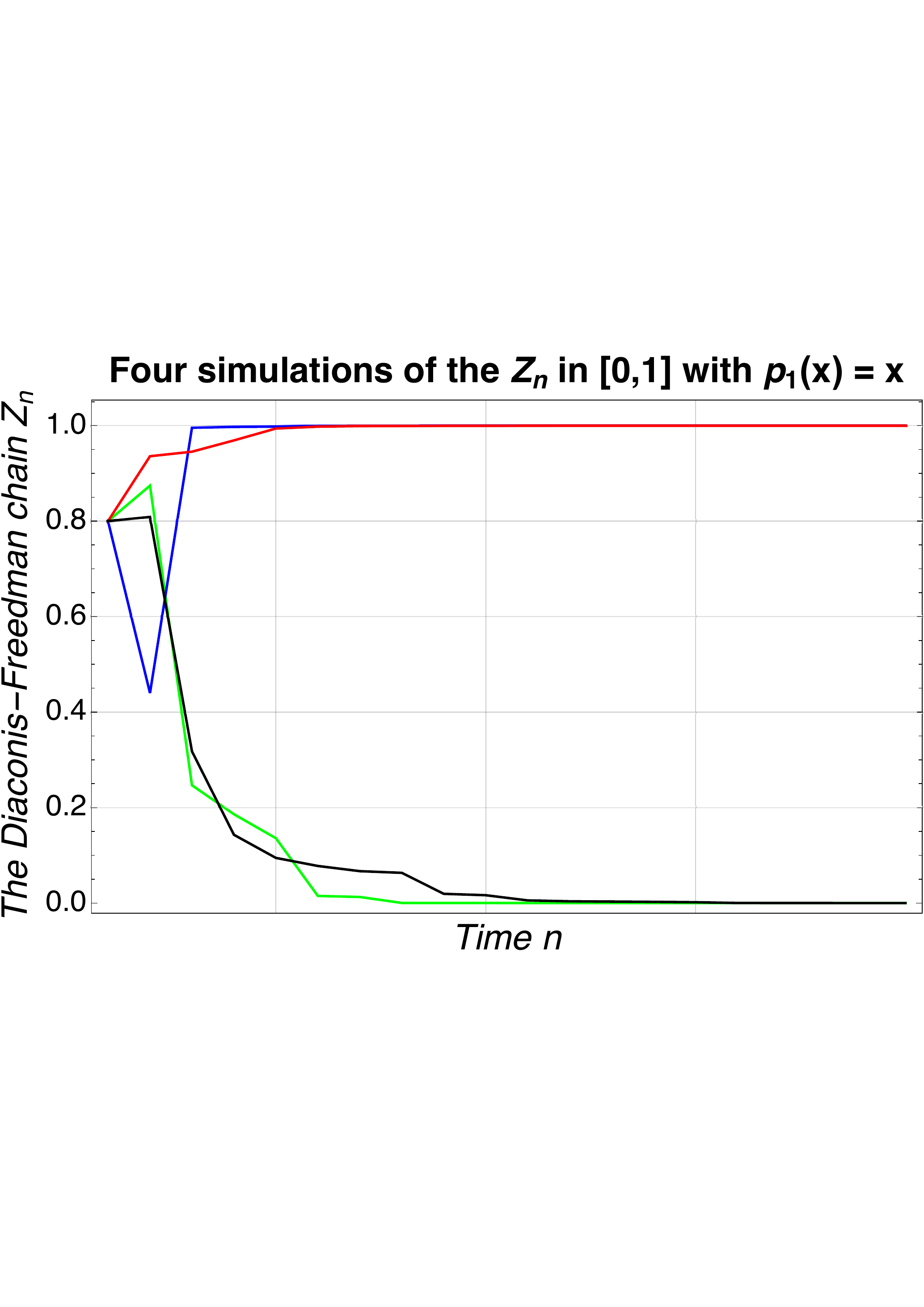} \qquad \includegraphics[width=6cm]{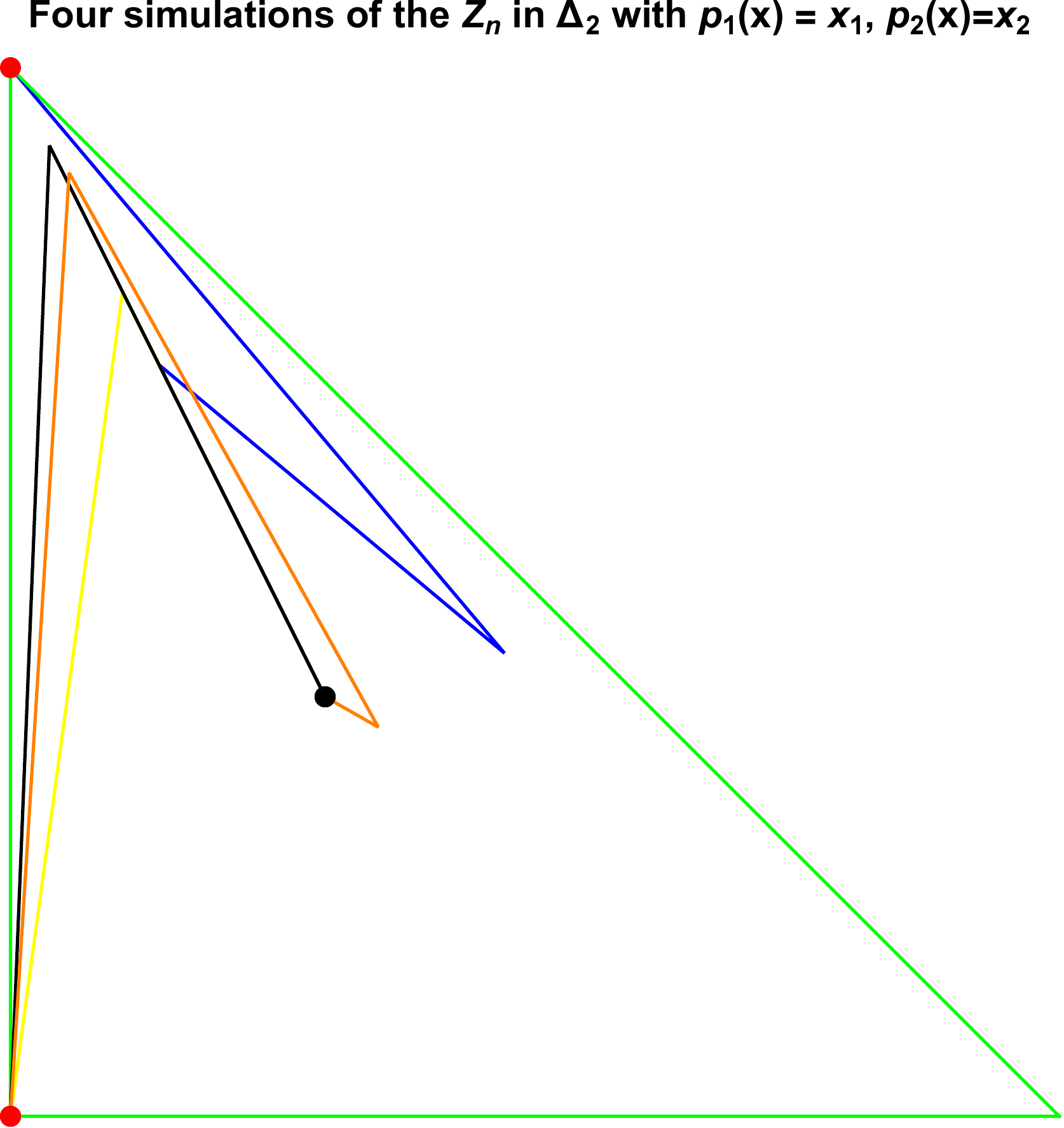}
	\end{center}
\caption{Left: Four random trajectories of $Z_n$ in $[0,1]$ starting at $x_0=0.8$, they will absorb in $\{0,1\}$; Right: Four random trajectories of $Z_n$ in $\De_2$ starting at $\x_0=(0.3,0.4)$, they will absorb in $\{\e_0, \e_2\}$.}
\label{Fig:2}
\end{figure}

\section{Asymptotic behavior of $(Z_n)_{n\ge 0}$}
In this section, we describe  the asymptotic behavior of $(Z_n)_{n\ge 0}$ using  the notion of {\it minimal $P$-absorbing compact subsets}. First we establish  some properties of the  family $\mathcal K_m$ of these subsets and propose their classification. By a general results of \cite{herve94}, this yields to the classification of   the set of $P$- invariant probability measures as well as the description of the asymptotic behavior of $(Z_n)_{n\ge 0}$. The classification is complete in $\De_2$ but partial in $\De_d, d>2$.

\subsection{The set $\mathcal K_m$ of minimal $P$-absorbing compact subsets}
\begin{defn}
A non-empty compact subset $K\subseteq \De_d$ is said to be {\it $P$-absorbing} if for all $\x \in K$
\[
P(\x,K^c) := P\dv_{K^c}(\x) = \suml_{i=0}^d p_i(\x) \int_0^1 \dv_{K^c}(t\x + (1-t)e_i)dt = 0,
\]
where $K^c = \De_d \setminus K$. It is {\it minimal}
when it does not contain any proper $P$-absorbing compact subset. 
\end{defn}
We denote by $\mathcal K_m$ is the set of all minimal $P$-absorbing compact subsets.
For any  $\x_0 \in \De_d$ and $\eps>0$, we set $B_{\eps}(\x_0) = \{\x \in \De_d: |\x-\x_0| < \eps\}$   and
$B_{\eps} = \cup_{i=0}^d B_{\eps}(\e_i)$. 
\begin{figure}[htb!]
\begin{center}
\begin{tikzpicture}[scale=1]
    \draw[color=red, ultra thick, ->] (0,0) -- (4,0);
    \draw[color=red, ultra thick,->] (0,0) -- (0,4);
       \draw[color=red, ultra thick] (3,0) -- (0,3);
                    \filldraw[fill=cyan, draw=blue] (0,0) -- (.4,0) arc (0:90:.4) -- (0,0);
                     \filldraw[fill=cyan, draw=blue] (3,0) -- (2.6,0) arc (180:135:.4) -- (3,0);

 \filldraw[fill=cyan, draw=blue] (0,3) -- (0,2.6) arc (-90:-45:.4) -- (0,3);
                                          \fill[blue] (0,0) circle (1.5pt);
                                           \fill[blue] (0,3) circle (1.5pt);
                                            \fill[blue] (3,0) circle (1.5pt);
                         
              \node[below] at (3,0) {$\e_1$} ; 
                \node[below] at (0,0) {$\e_0$} ; 
                         \node[left] at (0,3) {$\e_2$} ; 
                                \node[left] at (0,0.3) {$B_{\eps}(\e_0)$} ;
         \node[right] at (0.2,3) {$B_{\eps}(\e_2)$} ;
           \node[right] at (3,.3) {$B_{\eps}(\e_1)$} ;
                                                     
       \node[right] at (4,0) {$x_1$} ; 
              \node[left] at (0,4) {$x_2$} ; 
                         \end{tikzpicture}
                         
                         \end{center}
           \caption{Domain $B_{\eps}(\e_i)$}\label{fig:absorbingset1}
  \end{figure}
  
  The following  rules are useful to describe  the  minimal $P$-absorbing sets $K$.
\begin{prop}\label{prop:rules}
\begin{enumerate}
\item[(i)] \label{prop:atleast}
If $K\in \mathcal K_m$ then $K$ contains at least one vertex.
\item[(ii)] \label{prop:singular}
If $K\in \mathcal K_m$, $\e_i\in K$ and $p_i(\e_i) =1$ then $K=\{\e_i\}$.
\item[(iii)] \label{prop:halfsingular}
If $K\in \mathcal K_m$, $\e_i\in K$ and $p_j(\e_i) > 0$ for some $j\ne i$ then $[\e_i,\e_j] \subseteq K$.
\item[(iv)] \label{prop:fill}
If $K\in \mathcal K_m$ and ${p_i}(\x) > 0$ for some $\x \in K\setminus\{\e_i\}$ then $[\e_i,\x] \subseteq K$.
\end{enumerate}
\end{prop}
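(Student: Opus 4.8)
The plan is to isolate a single mechanism from the $P$-absorbing condition and then read off all four rules from it. That mechanism is precisely statement (iv), so I would prove (iv) first and treat (i) and (iii) as immediate corollaries, leaving (ii) to be handled separately with minimality.

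For (iv), fix a $P$-absorbing $K$, a point $\x \in K$, and an index $i$ with $p_i(\x) > 0$. Write $\phi(t) := t\x + (1-t)\e_i$ for $t \in [0,1]$. The absorbing identity $\sum_{j} p_j(\x)\int_0^1 \dv_{K^c}(t\x+(1-t)\e_j)\,dt = 0$ is a sum of nonnegative terms, so every term carrying a positive weight vanishes; in particular $\int_0^1 \dv_{K^c}(\phi(t))\,dt = 0$, whence $\phi(t) \in K$ for Lebesgue-almost every $t \in (0,1)$. The set $A := \phi^{-1}(K)$ is closed in $[0,1]$ because $K$ is compact and $\phi$ is continuous, and it has full measure in $(0,1)$; its complement in $[0,1]$ is then relatively open and Lebesgue-null, hence empty, since a nonempty relatively open subset of an interval contains a subinterval of positive length. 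Therefore $A = [0,1]$ and $\phi([0,1]) = [\e_i,\x] \subseteq K$, giving (iv). I would stress that this argument uses only that $K$ is compact and absorbing, not minimality, and that letting $t \to 0$ is exactly what captures the endpoint $\e_i$.

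Statements (i) and (iii) then cost nothing more. For (i): $K$ is nonempty, so choose any $\x \in K$; since $\sum_{j=0}^d p_j(\x) = 1$ there is an index $i$ with $p_i(\x) > 0$, and (iv) (trivial when $\x = \e_i$) gives $\e_i \in [\e_i,\x] \subseteq K$, so $K$ contains a vertex. For (iii): apply (iv) at the point $\x = \e_i \in K$ in the direction $j \neq i$; the hypothesis $p_j(\e_i) > 0$ yields $[\e_j,\e_i] \subseteq K$, that is $[\e_i,\e_j] \subseteq K$. Only (ii) genuinely invokes minimality. Here the idea is to show $\{\e_i\}$ is itself $P$-absorbing and then use that $K$ is minimal. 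When $p_i(\e_i) = 1$ all other weights $p_j(\e_i)$ vanish, and since the segment $(\e_i,\e_i)$ degenerates to the single point $\e_i$ one gets $Pf(\e_i) = \int_0^1 f(t\e_i + (1-t)\e_i)\,dt = f(\e_i)$, i.e. $P(\e_i,\cdot) = \delta_{\e_i}$ and $P(\e_i,\{\e_i\}^c) = 0$. Thus $\{\e_i\}$ is a $P$-absorbing compact subset of $K$, and minimality forces $K = \{\e_i\}$.

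The only delicate point is the measure-theoretic step inside (iv): upgrading ``almost every $t$'' to ``every $t$'' so as to recover the closed segment together with its endpoint $\e_i$. Everything else is either a one-line application of that lemma or, for (ii), the short transition-kernel computation combined with the definition of minimality. I would therefore present (iv) as the core of the proof and keep the remaining three rules brief.
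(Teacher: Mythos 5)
Your proof is correct. The underlying mechanism is the same as the paper's -- the absorbing condition together with the openness of $K^c$ forces entire segments $[\e_i,\x]$ into $K$ -- but your organization differs: you prove (iv) first as the master lemma and then obtain (i) and (iii) as one-line corollaries, whereas the paper gives three separate direct arguments, each by contradiction (a point of the segment in $K^c$ yields a ball $B_\eps(\x_0)\subseteq K^c$, hence $P(\cdot,K^c)\ge p_i(\cdot)\,\eps>0$). Your version of the key step is the positive formulation of the same fact: the vanishing of $\int_0^1 \dv_{K^c}(t\x+(1-t)\e_i)\,dt$ puts $\phi(t)$ in $K$ for a.e.\ $t$, and closedness of $\phi^{-1}(K)$ upgrades this to all $t\in[0,1]$, capturing the endpoint $\e_i$; this is logically equivalent to the paper's contrapositive but slightly cleaner, and you are right to flag the a.e.-to-everywhere upgrade as the only delicate point. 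Your derivation of (i) is genuinely more economical than the paper's: the paper places a ball around every vertex and bounds $P(\x,K^c)$ from below uniformly, while you simply pick any $\x\in K$, use $\sum_i p_i(\x)=1$ to find a positive weight, and invoke (iv). For (ii) both arguments coincide: $p_i(\e_i)=1$ forces $P(\e_i,\cdot)=\de_{\e_i}$, so $\{\e_i\}$ is $P$-absorbing and minimality of $K$ gives $K=\{\e_i\}$. The only thing your restructuring costs is that (iv) must be stated and proved for arbitrary $P$-absorbing compacts (not just minimal ones) so that it can be applied before minimality enters; you correctly note that your proof of (iv) uses only compactness and absorption, so this is not a gap.
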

\begin{proof}
\begin{enumerate}
\item[(i)]
Assume that $\e_i \notin K$ for all $i=0,\ldots, d$. Since $K^c$ is open,  there exists $\eps > 0$ such that $B_{\eps} \subseteq K^c$. Therefore, for all $\x \in K$,
\begin{align*}
P(\x,K^c) &= \sum_{i=0}^d p_i(\x) \int_0^1 \dv_{K^c}(t\x +(1-t) e_i)dt \ge  \sum_{i=0}^d p_i(\x) \int_0^1 \dv_{B_{\eps}}(t\x +(1-t) e_i)dt\\
 &= \sum_{i=0}^d p_i(\x) \eps = \eps >0.
 \end{align*}
This contradicts to the fact that  $K$ is $P$-absorbing.

\item[(ii)] It suffices to prove that $\{\e_i\} \in \mathcal K_m$. This is true because 
\[
P(\e_i, \{\e_i\}^c) = \suml_{j\ne i} p_j(\e_i) = 0. 
\]
\item[(iii)] If $[\e_i,\e_j] \cap K^c\neq \emptyset$,  then there exist  $\x_0 \in [\e_i,\e_j] \cap K^c$ and  $\eps>0$ such that $B_{\eps}(\x_0) \subseteq K^c$. Therefore 
$
P(\e_i,K^c) \ge p_j(\e_i) \eps > 0
, $   contradiction.
\item[(iv)] Again, if $[\e_i,\e_j] \cap K^c\neq \emptyset$,  then there exist  $\x_0 \in [\e_i,\e_j] \cap K^c$ and  $\eps>0$ such that $B_{\eps}(\x_0) \subseteq K^c$; hence,
\[
P(\x,K^c) \ge p_i(\x) \eps > 0
\]
which is a contradiction.
\end{enumerate}
\end{proof}
This Proposition~\eqref{prop:rules}  easily yields to the classification of $\mathcal K_m$ when$d=1$ (see \cite{LP19}):
\begin{enumerate}
\item[(i)] If $p_1(1) <1 $ and $p_0(0) < 1$ then $\mathcal K_m = \{[0,1]\}$;
\item[(ii)] If $p_1(1) <1$ and $p_0(0) = 1$ then $\mathcal K_m = \{\{1\}\}$;
\item[(iii)] If $p_1(1) = 1$ and $p_0(0) < 1$ then $\mathcal K_m = \{\{0\}\}$;
\item[(iv)] If $p_1(1) = 1$ and $p_0(0) = 1$ then $\mathcal K_m = \{\{0\},\{1\}\}$.
\end{enumerate}
 
In  the  following section, we describe the set $\mathcal K_m$ in $\Delta_2$. Section  \ref{asymptoticZnDelta2} is devoted to the asymptotic behavior in distribution of $(Z_n)_{n \geq 0}$. The reader may be easily convinced  that similar statements hold in higher dimension.

\subsection{Classification of $\mathcal K_m$ in $\De_2$}
We assume $d=2$ in this subsection. Unlike the case $d=1$, for the case of $d=2$ we need to classify the values of $p_i$ not only on the vertices but also on the edges. We denote by $L_0=\{\x \in [\e_1,\e_2]: p_0(\x)>0\}$, $L_1=\{\x \in [\e_0,\e_2]: p_1(\x)>0\}$, and $L_2=\{\x \in [\e_0,\e_1]: p_2(\x)>0\}$. Let $L^c_0$ (resp. $L^c_1$ and  $L^c_2$) be the complement of $L_0$ (resp. $L_1$ and $L_2$) in $ [\e_1,\e_2]$ (resp.  $[\e_0,\e_2]$  and  $[\e_0,\e_1]$).

Let us fix $K \in \mathcal K_m$. There are several  cases to consider.
\begin{enumerate}
\item \underline{$p_0(\e_0) = p_1(\e_1)=p_2(\e_2)=1$}

By Proposition~\ref{prop:rules} [i], either $\e_0 \in K$ or $\e_1 \in K$ or $\e_2 \in K$. When  $\e_0\in K$, Proposition~\ref{prop:rules} [ii] implies $K=\{\e_0\}$; similarly for $\e_1$ or $\e_2$. Finally $\mathcal K_m = \{\{\e_0\},\{\e_1\},\{\e_2\}\}$.
\item  \underline{$p_0(\e_0) = p_1(\e_1)=1 $ but $p_2(\e_2)<1$}

As above, if $\e_0\in K$ (resp. $\e_1\in K$) then $K=\{\e_0\}$ (resp.  $\e_1\in K$).

Assume now $\e_2\in K$. Proposition~\ref{prop:rules} [iii] implies $[\e_0,\e_2] \subseteq K$ when $p_2(\e_0) > 0$ and   $[\e_1,\e_2] \subseteq K$ when $p_2(\e_1) > 0$. Therefore $K$ contains $\e_1$ or $\e_2$, contradiction with the minimality of $K$. Finally  $\mathcal K_m = \{\{\e_0\},\{\e_1\}\}$. 

Similar statements hold when $p_0(\e_0) = p_2(\e_2) =1$ but $p_1(\e_1) <1$ or $p_1(\e_1) = p_2(\e_2) = 1$ but $p_0(\e_0) <1$.
\item \underline{$p_0(\e_0) =1$ but $p_1(\e_1), p_2(\e_2)<1$}

\begin{figure}[htb!]
\begin{center}
\begin{tikzpicture}[scale=1]
    \draw[color=red, ultra thick, ->] (0,0) -- (4,0);
    \draw[color=red, ultra thick,->] (0,0) -- (0,4);
       \draw[color=red, ultra thick] (3,0) -- (0,3);
                         \filldraw[fill=cyan, draw=blue] (0,0) -- (.4,0) arc (0:90:.4) -- (0,0);
                                                            \fill[blue] (0,0) circle (1.5pt);
                                           \fill[blue] (0,3) circle (1.5pt);
                                            \fill[blue] (3,0) circle (1.5pt);

              \node[below] at (3,0) {$\e_1$} ; 
                \node[below] at (0,0) {$\e_0$} ; 
                         \node[left] at (0,3) {$\e_2$} ; 
                                \node[left] at (0,0.3) {$B_{\eps}(\e_0)$} ;
                                                          
       \node[right] at (4,0) {$x_1$} ; 
              \node[left] at (0,4) {$x_2$} ; 
                         \end{tikzpicture}
                         \end{center}
           \caption{Domain \red{$B_{\eps}(\e_0)$}}\label{fig:absorbingset2}
  \end{figure}

Firstly, $\{\e_0\}\in \mathcal K_m$ and
 $K=\{\e_0\}$ as soon as $\e_0\in K$.

Assume now $\e_0\notin K$ (thus $B_{\eps}(\e_0) \subseteq K^c$ for some $\eps>0$) and suppose  for instance $\e_1 \in K$ (the same argument holds with $\e_2$). Hence,  $p_0(\e_1) = 0$; indeed, condition $p_0(\e_1) > 0$  implies $[\e_0,\e_1] \subseteq K$, contradiction. Consequently  $p_2(\e_1) > 0$, which implies  $[\e_1,\e_2] \subseteq K$, then $p_0(\e_2) = 0$ and $p_1(\e_2) > 0$. This readily implies that $L_0 =\emptyset$;  otherwise,  $p_0(\x_0) > 0$ for some $\x_0\in [\e_1,\e_2]$,   therefore 
$
P(\x_0, K^c) \ge p_0(\x_0) \eps >0, 
$
 contradiction with the fact that $\x_0\in K$ and $K$ is invariant. The equality $L_0 =\emptyset$ yields $K=[\e_1,\e_2]$  and  $\mathcal K_m = \{\{\e_0\},[\e_1,\e_2]\}$.

Finally
\[\mathcal K_m = 
\begin{cases}
\{\{\e_0\}, [\e_1,\e_2]\}, & \text{ if }  L_0=\emptyset\: \\
\{\{\e_0\}\}, & \text{ else. }
\end{cases}
\]
Similar statements hold when $p_1(\e_1)=1$ but $p_0(\e_0), p_2(\e_2)<1$ or $p_2(\e_2) =1$ but $p_0(\e_0), p_1(\e_1)<1$.

\item \underline{$p_0(\e_0), p_1(\e_1), p_2(\e_2) <1$ and $L_0=\emptyset$}

By Proposition~\ref{prop:rules} [i], the set $K$ contains at least one of the vertices.  Assume for instance $\e_1 \in K$, thus  $p_2(\e_1) > 0 $ since $L_0=\emptyset$, which implies $[\e_1,\e_2] \subseteq K$. The condition $L_0=\emptyset$ also 
implies $
P(\x, [\e_1,\e_2]^c) = p_0(\x) = 0  
$
for all $\x\in[\e_1,\e_2]$, finally $K=[\e_1,\e_2]$. The same conclusion holds when $\e_2 \in K$.

Now,   the  set $K$ cannot contain $\e_0$. Otherwise, the condition $p_0(\e_0)<1$ implies either $[\e_0, \e_1] \subseteq K$ or $[\e_0, \e_2] \subseteq K$, thus $\e_1\in K$ or $\e_2\in K$. This yields $K=[\e_1, \e_2]$, contradiction.

Similar statements hold when $L_1=\emptyset$ or $L_2=\emptyset$.

\item  \underline{$p_0(\e_0), p_1(\e_1), p_2(\e_2) <1$ and $L_0, L_1, L_2$ are nonempty}

 In this case we always have $\{\e_0,\e_1,\e_2\} \subseteq K$. Indeed, by Proposition~\ref{prop:rules} [i], the set $K$ contains at least one vertex, say $\e_0 \in K$; since $p_0(\e_0) <1$, it contains even one of the two sides   $[\e_0,\e_1] $ or $[\e_0,\e_2] $.  Assume $[\e_0,\e_1] \subset K$  (thus $\e_1\in K$) and let us check that  $\e_2 \in K$. Otherwise $B_{\eps}(\e_2) \subseteq K^c$ for some $\eps>0$; since  $L_2$ is a proper subset of $[\e_0,\e_1]$,   there exists  $\x\in  [\e_0,\e_1] \subseteq K$ such that
$
P(\x,K^c) \ge p_2(\x)\eps >0
$,  contradiction.

Now, the inclusion $\{\e_0,\e_1,\e_2\} \subseteq K$ combined with Proposition~\ref{prop:rules} [iv]  yields 
\[co\{\e_0, L_0\} \subseteq K, co\{\e_1, L_1\} \subseteq K\quad \text{and} \quad co\{\e_2, L_2\} \subseteq K\]
So that, by the compactness of $K$, 
\bel{K0}
K_0 := \overline{co\{\e_0, L_0\} \cup co\{\e_1, L_1\} \cup co\{\e_2, L_2\}} \subseteq K.
\qe
Next, we denote by 
\bel{cond:last}
L_i(1) := \{\x \in K_0: p_i(\x) > 0\} ,\qquad \forall i\in \{0,1,2\}. 
\qe
It is easy to see that $L_i(1) \supseteq L_i$ for all $i\in \{0,1,2\}$. Then,  Proposition~\ref{prop:rules} [iv] and the compactness of $K$ yield
\bel{K1}
K_1 := \overline{co\{\e_0, L_0(1)\} \cup co\{\e_1, L_1(1)\} \cup co\{\e_2, L_2(1)\}} \subseteq K.
\qe
We iteratively construct a sequence of increasing compact subsets $\{K_n\}_{n\ge 0} \subseteq K$ and consider two possibilities: if there is a finite $n$ such that $K_n = \De_2$ then $K =\De_2$; otherwise, $K=K_{\infty}:=\overline{\{\cup_{n\ge 0} K_n\}} \subset \De_2$. In this case $\mathcal K_m$ consists of a unique minimal $P$-absorbing compact set
\[
\mathcal K_m=
\begin{cases}
\{\De_2\}, &\text{ if there exists a finite $n$ such that $K_n=\De_2$}, \\
\{K_{\infty}\}, &\text{ otherwise.}
\end{cases}
\] 
We illustrate here two cases when $\mathcal K_m = \{K_0\}$ and $\mathcal K_m = \{K_1\}$. 
\begin{enumerate}
\item We assume that, for $i=0, 1, 2$, 
\bel{cond:K_0only}
p_i(\x) = 0 \text{ for all } \x \in \Om_i:=\{\y\in K_0: [\y,\e_i] \cap K_0^c \ne \emptyset\}
\qe
Then  $K_1=K_0$ and, iteratively, $K_{\infty} = K_0$. Moreover, from \eqref{cond:K_0only},  $P(\x,K_0^c) = 0$ for all $\x\in K_0$, therefore  $K = K_0$ by the minimality of $K$ and \eqref{K0}.  
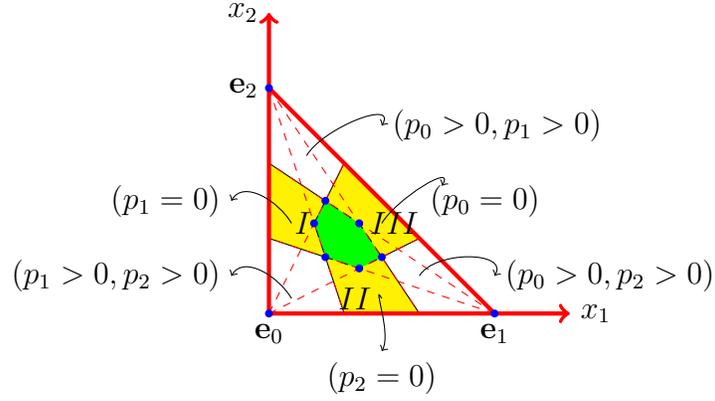
\begin{figure}[htb!]
\begin{center}
\begin{tikzpicture}[scale=1]
     \draw[fill=yellow]  (3/4,3/4) -- (1,0) -- (2,0) -- (3/2,3/4) -- (3/4,3/4);
       \draw[fill=yellow]  (3/2,3/4) -- (2,1) -- (1,2) -- (3/4,3/2) -- (3/2,3/4);
         \draw[fill=yellow]  (3/4,3/4) -- (3/4,3/2) -- (0,2) -- (0,1) -- (3/4,3/4);
             \draw[fill=green, dashed]  (3/4,3/4) -- (1.2, 0.6) -- (3/2,3/4) -- (1.2,1.2)-- (3/4,3/2) -- (0.6,1.2) -- (3/4,3/4);

    \draw[color=red, ultra thick, ->] (0,0) -- (4,0);
    \draw[color=red, ultra thick,->] (0,0) -- (0,4);
       \draw[color=red, ultra thick] (3,0) -- (0,3);
        \draw[color=red, dashed] (3,0) -- (0,1);
          \draw[color=red, dashed] (3,0) -- (0,2);
            \draw[color=red, dashed] (0,3) -- (1,0);
              \draw[color=red, dashed] (0,3) -- (2,0);
                \draw[color=red, dashed] (0,0) -- (2,1);
                  \draw[color=red, dashed] (0,0) -- (1,2);
                  
                     \node[left] at (1.5, .2) {${II}$} ; 
                        \node[right] at (0.2, 1.2) {$I$} ; 
                            \node[right] at (1.2, 1.2) {$III$} ; 
                     
                                                            \fill[blue] (0,0) circle (1.5pt);
                                           \fill[blue] (0,3) circle (1.5pt);
                                            \fill[blue] (3,0) circle (1.5pt);
                               \fill[blue] (3/4,3/4) circle (1.5pt);
      \fill[blue] (0.6,1.2) circle (1.5pt);

      \fill[blue] (1.2,0.6) circle (1.5pt);

      \fill[blue] (1.2,1.2) circle (1.5pt);

 \fill[blue] (3/2,3/4) circle (1.5pt);
 \fill[blue] (3/4,3/2) circle (1.5pt);
 
                 \node[right] at (2,1.5) {$(p_0 = 0)$} ; 
\draw[->] (1.5, 1.2)  to [out=60,in=60] (2.3,1.7);

          \node[below] at (1.5,-.5) {$(p_2 = 0)$} ; 
\draw[->] (1.5, .2)  to [out=120,in=60] (1.5,-.5);

          \node[left] at (-0.5,1.5) {$(p_1 = 0)$} ; 
\draw[->] (0.3, 1.2)  to [out=120,in=60] (-0.5,1.5);

   \node[left] at (-0.5,0.5) {$(p_1 > 0, p_2 >0)$} ; 
\draw[->] (0.3, 0.2)  to [out=120,in=60] (-0.5,0.5);

 \node[right] at (3,0.5) {$(p_0 > 0, p_2>0)$} ; 
\draw[->] (2, 0.5)  to [out=60,in=60] (3,0.5);

 \node[right] at (1.5,2.5) {$(p_0 > 0, p_1>0)$} ; 
\draw[->] (0.5, 2.1)  to [out=60,in=60] (1.5,2.5);

                                                    \node[below] at (3,0) {$\e_1$} ; 
                \node[below] at (0,0) {$\e_0$} ; 
                         \node[left] at (0,3) {$\e_2$} ; 
                                                                                       
       \node[right] at (4,0) {$x_1$} ; 
              \node[left] at (0,4) {$x_2$} ; 
                         \end{tikzpicture}
                         \end{center}
           \caption{$K_0=$ white domains $\cup $ yellow domains = $\De_2 \setminus$ green domain; $K_0^c$ is the green domain; $\Om_1, \Om_2, \Om_0$ are yellow domains $I, II, III$ correspondingly.}\label{fig:absorbingset3}
  \end{figure}
\item  We assume now, for $i=0, 1, 2,$ 
\bel{cond:K_1only}
p_i(\x) = 0 \text{ for all } \x \in \Om_i(1):=\{ i^{th}\  \text{yellow region}.\}
\qe
Then, $K_1\ne K_0$, $K_2=K_1$ and iteratively $K_{\infty} = K_1$. Moreover, by \eqref{cond:K_1only},   $P(\x,K_1^c) = 0$ for all $\x\in K_1$, therefore $K = K_1$ by the minimality of $K$ and \eqref{K1}.  

\begin{figure}[htb!]
\begin{center}
\begin{tikzpicture}[scale=1]
    \draw[fill=yellow]  (1,.67) -- (1,0) -- (2,0) -- (3/2,3/4) -- (1.22,1) -- (.95,.78) -- (1,.67);
       \draw[fill=yellow]  (1.35,1.11) -- (2,1) -- (1,2) -- (3/4,3/2) -- (.75,1.25)-- (1.22,1)--(1.35,1.11);
         \draw[fill=yellow]  (.65,1.315) -- (0,2) -- (0,1) -- (3/4,3/4) -- (.95,.78) -- (.75,1.25) -- (.65,1.315);
             \draw[fill=green, dashed]  (.95,.78)--(1.22,1)--(.75,1.25)--(.95,.78);

    \draw[color=red, ultra thick, ->] (0,0) -- (4,0);
    \draw[color=red, ultra thick,->] (0,0) -- (0,4);
       \draw[color=red, ultra thick] (3,0) -- (0,3);
        \draw[color=red, dashed] (3,0) -- (0,1);
          \draw[color=red, dashed] (3,0) -- (0,2);
            \draw[color=red, dashed] (0,3) -- (1,0);
              \draw[color=red, dashed] (0,3) -- (2,0);
                \draw[color=red, dashed] (0,0) -- (2,1);
                  \draw[color=red, dashed] (0,0) -- (1,2);
                                 \draw[color=blue, dotted] (0,0) -- (1.35,1.11);
                                    \draw[color=blue, dotted] (0,3) -- (1,.67);
                                      \draw[color=blue, dotted] (3,0) -- (.65,1.315);
                         \fill[blue] (1,.67) circle (.5pt);
                               \fill[blue] (1.35,1.11) circle (.5pt);
                               \fill[blue] (.65,1.315) circle (.5pt);
                     \node[left] at (1.5, .2) {${II}$} ; 
                        \node[right] at (0.2, 1.2) {$I$} ; 
                            \node[right] at (1.2, 1.2) {$III$} ; 
                            
                                     \fill[blue] (.95,.78) circle (1pt);
                                                 \fill[blue] (1.22,1) circle (1pt);
                                                             \fill[blue] (.75,1.25) circle (1pt);
                     
                                                            \fill[blue] (0,0) circle (1.5pt);
                                           \fill[blue] (0,3) circle (1.5pt);
                                            \fill[blue] (3,0) circle (1.5pt);
                               \fill[blue] (3/4,3/4) circle (.5pt);
      \fill[blue] (0.6,1.2) circle (.5pt);

      \fill[blue] (1.2,0.6) circle (.5pt);

      \fill[blue] (1.2,1.2) circle (.5pt);

 \fill[blue] (3/2,3/4) circle (.5pt);
 \fill[blue] (3/4,3/2) circle (.5pt);
 
                 \node[right] at (2,1.5) {$(p_0 = 0)$} ; 
\draw[->] (1.5, 1.2)  to [out=60,in=60] (2.3,1.7);

          \node[below] at (1.5,-.5) {$(p_2 = 0)$} ; 
\draw[->] (1.5, .2)  to [out=120,in=60] (1.5,-.5);

          \node[left] at (-0.5,1.5) {$(p_1 = 0)$} ; 
\draw[->] (0.3, 1.2)  to [out=120,in=60] (-0.5,1.5);

   \node[left] at (-0.5,0.5) {$(p_1 > 0, p_2 >0)$} ; 
\draw[->] (0.3, 0.2)  to [out=120,in=60] (-0.5,0.5);

 \node[right] at (3,0.5) {$(p_0 > 0, p_2>0)$} ; 
\draw[->] (2, 0.5)  to [out=60,in=60] (3,0.5);

 \node[right] at (1.5,2.5) {$(p_0 > 0, p_1>0)$} ; 
\draw[->] (0.5, 2.1)  to [out=60,in=60] (1.5,2.5);
                                                    \node[below] at (3,0) {$\e_1$} ; 
                \node[below] at (0,0) {$\e_0$} ; 
                         \node[left] at (0,3) {$\e_2$} ; 
                                                                                       
       \node[right] at (4,0) {$x_1$} ; 
              \node[left] at (0,4) {$x_2$} ; 
                          \end{tikzpicture}
                         \end{center}
           \caption{$K_1=$ white domains $+$ yellow domains = $\De_2$ - green domain; $K_1^c$ is green domain; $\Om_1(1), \Om_2(1), \Om_0(1)$ are yellow domains $I, II, III$ correspondingly.}\label{fig:absorbingset4}
  \end{figure}
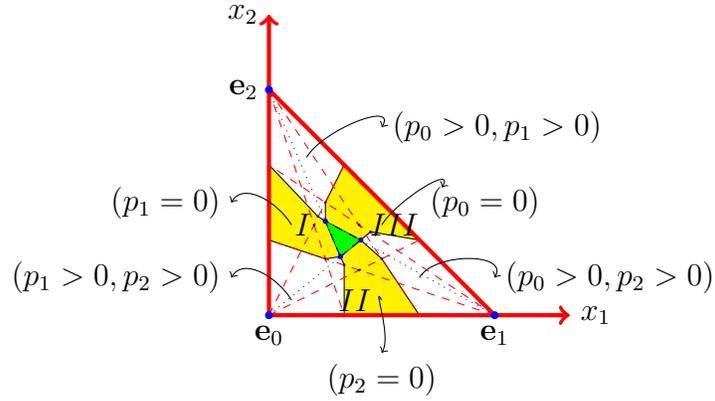
\end{enumerate}

\end{enumerate}

\newpage 
In summary, the complete classification of $\mathcal K_m$ in $\De_2$ is as follows:
\begin{thm} \label{classdelta2} In $\De_2$,  
\begin{enumerate}
\item either $\mathcal K_m$ consists of $3$ vertices;
\item  or $\mathcal K_m$ consists of $2$ vertices;
\item or $\mathcal K_m$ consists of $1$ vertex;
\item or $\mathcal K_m$ consists of $1$ edge;
\item or $\mathcal K_m$ consists of $1$ vertex and $1$ opposite edge;
\item or $\mathcal K_m$ consists of a compact subset $K_\infty \subseteq \De_2$ such that $K_\infty \cap \mathring{\De}_2\neq \emptyset.$ This set $K_\infty$ may equal the whole set $\De_2$.
\end{enumerate}
\end{thm}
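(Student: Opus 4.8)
The plan is to observe that the theorem is a synthesis of the exhaustive case analysis carried out in the preceding subsection, so the proof reduces to organizing those cases by a single discrete invariant and checking that the resulting configurations realize exactly the six listed outcomes. I would take as the primary invariant the number $k \in \{0,1,2,3\}$ of \emph{frozen} vertices, i.e.\ the indices $i$ with $p_i(\e_i)=1$. By Proposition~\ref{prop:rules}(i) every $K \in \mathcal K_m$ contains at least one vertex, and by (ii) a frozen vertex is itself the singleton minimal set $\{\e_i\}$; hence $k$ controls precisely how many isolated singletons are forced into $\mathcal K_m$, and the remaining structure is then governed by the edge-activity sets $L_0,L_1,L_2$.

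First I would dispose of the two extreme values of $k$. When $k=3$, parts (i) and (ii) together force every minimal set to equal one of $\{\e_0\},\{\e_1\},\{\e_2\}$ and all three occur, giving outcome (1). When $k=2$, say $p_0(\e_0)=p_1(\e_1)=1$ and $p_2(\e_2)<1$, the two frozen vertices give singletons, whereas any minimal $K$ containing the non-frozen $\e_2$ must, by (iii), contain an edge through $\e_2$ and hence a frozen vertex, contradicting minimality; this yields outcome (2).

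Next comes $k=1$, say $p_0(\e_0)=1$ and $p_1(\e_1),p_2(\e_2)<1$. The frozen vertex gives $\{\e_0\}\in\mathcal K_m$, and any minimal set avoiding $\e_0$ is forced by (iii)--(iv) to contain the whole opposite edge $[\e_1,\e_2]$; whether that edge is itself $P$-absorbing is decided by $L_0$. If $L_0=\emptyset$ there is no leakage off the edge and $[\e_1,\e_2]$ is a second minimal set (outcome (5)); if $L_0\neq\emptyset$ some point of the edge maps with positive probability into the interior, so the edge is not absorbing and $\{\e_0\}$ is the unique minimal set (outcome (3)). For $k=0$ I would split further: if some $L_i=\emptyset$, the opposite edge is absorbing and is the unique minimal set (outcome (4)); and if all three $L_i$ are nonempty, the argument of case~(5) above forces $\{\e_0,\e_1,\e_2\}\subseteq K$, after which the monotone fill-in construction $K_0\subseteq K_1\subseteq\cdots$ produces either $\De_2$ in finitely many steps or a limit $K_\infty$ meeting the interior (outcome (6)).

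The genuinely delicate step is this last one: verifying that the nested sequence $(K_n)$ obtained by repeatedly applying Proposition~\ref{prop:rules}(iv) and taking closures stabilizes to a $P$-absorbing set, and that the resulting $K_\infty=\overline{\bigcup_n K_n}$ is both minimal and genuinely intersects $\mathring\De_2$. The sequence is monotone and bounded, so the limit exists; the real work is to check that $P(\x,K_\infty^c)=0$ for every $\x\in K_\infty$, so that no further growth is possible and $K_\infty$ is absorbing, while minimality follows because any $P$-absorbing set containing a vertex must, by the same fill-in rules, contain every $K_n$ and hence $K_\infty$. That $K_\infty\cap\mathring\De_2\neq\emptyset$ is immediate from $L_0,L_1,L_2\neq\emptyset$, since the convex hulls $co\{\e_i,L_i\}$ already present in $K_0$ reach into the interior. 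Assembling the five cases then exhausts all sign patterns of the data $(p_i(\e_i))_i$ and $(L_i)_i$ and realizes precisely the six asserted outcomes.
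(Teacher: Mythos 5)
Your proof is correct and takes essentially the same route as the paper: its five cases are exactly your stratification by the number of vertices with $p_i(\e_i)=1$, refined by which of the sets $L_0,L_1,L_2$ are empty, all driven by Proposition~\ref{prop:rules} and, in the last case, by the same monotone fill-in sequence $K_0\subseteq K_1\subseteq\cdots$ with limit $K_\infty$. Your closing remarks on why $K_\infty$ is absorbing and minimal are in fact slightly more explicit than what the paper records.
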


\subsection{Asymptotic behavior of $(Z_n)_{n\ge 0}$ in $\Delta_2$}\label{asymptoticZnDelta2}

Using  Theorem \ref{classdelta2}, we may state the following Theorem
in $\De_2$.
\begin{thm}
\label{2DimGENERALDF}
 Let  $(Z_n)_{n \geq 0}$ be the Diaconis and Freedman's chain in $\De_2$ with weight functions $p_i(\x) \in \mathcal H_\alpha (\De_2)$. Denote by $\mathcal I (P)$ the set of the invariant  probability measures of $(Z_n)_{n \geq 0}$. Then, one of the following options holds.
\begin{enumerate}
\item If $\mathcal K_m = \{\{\e_0\}, \{\e_1\},\{\e_2\}\}$ then $\mathcal I (P) = co\{\de_{\e_0},\de_{\e_1},\de_{\e_2}\}$ and for any $\x \in \De_2$, the chain $(Z_n)_{n\geq 0}$ converges $\PP_{\x}$-a.s. to a random variable $Z_{\infty}$ with values in $\{\e_0,\e_1,\e_2\}$ and distribution \[
\PP_{\x}(Z_{\infty} = \e_i) =h_i(\x), \quad i\in \{0,1,2\}
\]
where $h_i$ is a nonnegative function in $H_{\al}(\De_2)$ such that $Ph_i=h_i$, $h_0+h_1+h_2 \equiv 1$, and
      $h_i(\e_j)=0$ for all $i\ne j\in \{0,1,2\}$.
Moreover, there exist $\kappa >0$ and $\rho\in [0, 1)$ such that
$$ \forall \varphi \in \mathcal   H_\alpha(\De_2), \ \forall \x \in \De_2 \quad \Bigg\vert P^n\varphi(\x)-h_0(\x)\varphi(\e_0)-h_1(\x)\varphi(\e_1)-h_2(\x)\varphi(\e_2)\Bigg\vert \leq  \kappa \rho^n \Vert \varphi\Vert_\alpha.$$

\item If $\mathcal K_m = \{\{\e_0\}, \{\e_1\}\}$ then $\mathcal I (P) = co\{\de_{\e_0},\de_{\e_1}\}$ and for any $\x \in \De_2$, the chain $(Z_n)_{n\geq 0}$ converges $\PP_{\x}$-a.s. to a random variable $Z_{\infty}$ with values in $\{\e_0,\e_1\}$ and distribution
\[
\PP_{\x}(Z_{\infty} = \e_i) =h_i(\x),  \quad i\in \{0,1\}
\]
where $h_i$ is the unique function in $H_{\al}(\De_2)$ such that $Ph_i=h_i$, $h_0+h_1\equiv 1$, and
      $h_i(\e_j)=\de_{ij}$ for all $i,j\in \{0,1\}$.
Moreover, there exist $\kappa >0$ and $\rho\in [0, 1)$ such that
$$ \forall \varphi \in \mathcal   H_\alpha(\De_2), \ \forall \x \in \De_2 \quad \Bigg\vert P^n\varphi(\x)-h_0(\x)\varphi(\e_0)-h_1(\x)\varphi(\e_1)\Bigg\vert \leq  \kappa \rho^n \Vert \varphi\Vert_\alpha.$$ Similar statements hold when $\mathcal K_m = \{\{\e_0\}, \{\e_2\}\}$ or $\mathcal K_m = \{\{\e_1\}, \{\e_2\}\}$.

\item If $\mathcal K_m = \{\{\e_0\}\}$ then $\mathcal I (P)=\{\de_{\e_0}\}$ and for any $\x \in \De_2$, the chain $(Z_n)_{n\geq 0}$ converges $\PP_{\x}$-a.s. to $\e_0$. Moreover, there exist $\kappa >0$ and $\rho\in [0, 1)$ such that
$$ \forall \varphi \in \mathcal   H_\alpha(\De_2), \ \forall \x \in \De_2 \quad \Bigg\vert P^n\varphi(\x)-\varphi(\e_0)\Bigg\vert \leq  \kappa \rho^n \Vert \varphi\Vert_\alpha.$$ Similar statements hold when $\mathcal K_m = \{\{\e_1\}\}$ or $\mathcal K_m = \{\{\e_2\}\}$.

\item If $\mathcal K_m = \{[\e_1,\e_2]\}$ then $\mathcal I (P) = \{\mu_{\infty}^{12}(d\x \cap [\e_1,\e_2])\}$ where $\mu_{\infty}^{12}$ is the  probability measure on $[\e_1,\e_2]$ with density
\[
g_{\infty}^{12}((t,1-t),(s,1-s)) := C\exp\left(\int_s^{t} \frac{p_1(u,1-u)}{1-u} du + \int_{t}^s \frac{p_2(u,1-u)}{u} du\right).
\] 
For any $\x \in \De_2$, the chain $(Z_n)_{n\geq 0}$ converges $\PP_{\x}$-a.s. to a random variable $Z_{\infty}$ with values on $[\e_1,\e_2]$ and distribution $\mu_{\infty}^{12}(d\x \cap [\e_1,\e_2])$. Moreover, there exist $\kappa >0$ and $\rho\in [0, 1)$ such that
$$ \forall \varphi \in \mathcal   H_\alpha(\De_d), \ \forall \x \in \De_d \quad \Bigg\vert P^n\varphi(\x)-
\mu_\infty^{12}(\varphi)
\Bigg\vert \leq  \kappa \rho^n \Vert \varphi\Vert_\alpha.$$
Similar statements hold when $\mathcal K_m = \{[\e_0,\e_1]\}$ or $\mathcal K_m = \{[\e_0,\e_2]\}$.

\item If $\mathcal K_m = \{\{\e_0\}, [\e_1,\e_2]\}$  then $\mathcal I (P) = co\{\de_{\e_0},\chi_{[\e_1,\e_2]}\}$  and for any $\x \in \De_2$, the chain $(Z_n)_{n\geq 0}$ converges to $\e_0$ with probability $h_0(\x)$ and to $ [\e_1,\e_2]$ with probability $h_{12}(\x)$. Moreover, there exist $\kappa >0$ and $\rho\in [0, 1)$ such that $\: \forall \varphi \in \mathcal   H_\alpha(\De_2), \forall \x \in \De_2 $
\[ \Bigg\vert P^n\varphi(\x)-h_0(\x)\varphi(\e_0) - h_{12}(\x)
\mu_\infty^{12}(\varphi)
\Bigg\vert \leq  \kappa \rho^n \Vert \varphi\Vert_\alpha.\]

\item If $\mathcal K_m = \{K_{\infty}\}$ (possible equal $\De_2$) then $\mathcal I (P) = \{\mu_{\infty}\}$ which is a probability measure on $\De_2$ with support $K_{\infty}$ (possible equal $\De_2$). 
\end{enumerate}
 
 \end{thm}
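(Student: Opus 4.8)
The plan is to derive every item from the general theory of quasi-compact Markov operators developed in \cite{herve94}, applied to the restriction of $P$ to the Banach space $\mathcal H_\alpha(\De_2)$. The inputs one must supply are (a) the quasi-compactness of $P$ on $\mathcal H_\alpha(\De_2)$, (b) the determination of its peripheral spectrum, and (c) the identification of the ergodic measure carried by each type of minimal $P$-absorbing set listed in Theorem~\ref{classdelta2}. Once these are in place, the six cases of the statement become the six possibilities for $\mathcal K_m$ recorded there.

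First I would prove quasi-compactness. Starting from the iterated-function-system representation with place-dependent probabilities $\mu_{\x}$ introduced before Theorem~\ref{thm:uniqueDepend}, the contraction-in-mean bound $r=\frac{1}{1+\alpha}<1$ (checked there in H1) together with the hypothesis $p_i\in\mathcal H_\alpha(\De_2)$ yields a Doeblin--Fortet inequality
\[
m_\alpha(P^n f)\le C\,r^{\,n} m_\alpha(f)+C_n\Vert f\Vert_\infty\qquad(f\in\mathcal H_\alpha(\De_2)).
\]
Since the embedding $\mathcal H_\alpha(\De_2)\hookrightarrow C(\De_2)$ is compact and $\Vert P\Vert_\infty\le 1$, the Ionescu-Tulcea--Marinescu theorem makes $P$ quasi-compact on $\mathcal H_\alpha(\De_2)$. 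As $P$ is Feller with $P1=1$, its spectral radius equals $1$, so the peripheral spectrum is finite and contained in the unit circle; because each step spreads mass continuously and uniformly along whole segments, no nontrivial cyclic class can arise and the only peripheral eigenvalue is $1$.

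Next I would invoke the correspondence of \cite{herve94}: the dimension of the harmonic eigenspace $\{h\in\mathcal H_\alpha(\De_2):Ph=h\}$ equals $\#\mathcal K_m$; each minimal $P$-absorbing set $K$ carries exactly one ergodic $P$-invariant probability measure $\nu_K$ with $\supp\nu_K=K$; these are the extreme points of $\mathcal I(P)$, whence $\mathcal I(P)=co\{\nu_K:K\in\mathcal K_m\}$. The dual basis of harmonic functions $h_K(\x)=\PP_\x(Z_\infty\in K)$ lies in $\mathcal H_\alpha(\De_2)$, satisfies $Ph_K=h_K$ and $\sum_K h_K\equiv 1$, with $h_K(\e_j)=1$ or $0$ according as $\e_j\in K$ or not, while the spectral gap gives
\[
\Big\Vert P^n\varphi-\sum_{K\in\mathcal K_m} h_K(\cdot)\,\nu_K(\varphi)\Big\Vert_\alpha\le\kappa\rho^n\Vert\varphi\Vert_\alpha
\]
for some $\kappa>0$, $\rho\in[0,1)$; this is precisely the quantitative estimate in each item. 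It then remains to identify $\nu_K$: for a vertex $K=\{\e_i\}$ necessarily $\nu_K=\de_{\e_i}$; for an edge $K=[\e_i,\e_j]$ the trace of the chain on $K$ is a one-dimensional Diaconis--Freedman chain (the remaining direction has weight $0$ on $K$ by absorption), so the one-dimensional theory of \cite{LP19} recalled at the start of Section~4 gives the invariant density $g_\infty^{ij}$; and for $K=K_\infty$ one takes the abstract ergodic $\mu_\infty$ supplied by the theory.

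Finally, the $\PP_\x$-almost sure assertions follow by combining two facts. Each bounded harmonic $h_K(Z_n)$ is a $\PP_\x$-martingale, hence converges a.s., and the partition of unity $\{h_K\}$ identifies the (random) minimal absorbing set into which the trajectory is eventually drawn; on that set the on-average contraction of the maps $H_i(t,\cdot)$ (Theorem~\ref{muvarie} applied to the restricted chain) produces the limit variable $Z_\infty$ with conditional law $\nu_K$, yielding $\PP_\x(Z_\infty\in K)=h_K(\x)$ and the stated distributions. I expect the main obstacles to be twofold: establishing the Doeblin--Fortet inequality uniformly together with the aperiodicity that isolates the eigenvalue $1$, and handling $K=K_\infty$, where one must verify that the set built by the iterative procedure of Section~5.2 supports a single ergodic measure and does not decompose into finer absorbing pieces.
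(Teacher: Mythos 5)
Your proposal follows essentially the same route as the paper's proof: a Doeblin--Fortet inequality with contraction factor $\frac{1}{1+\alpha}$ giving quasi-compactness of $P$ on $\mathcal H_\alpha(\De_2)$ (the paper cites Hennion's spectral theorem rather than Ionescu-Tulcea--Marinescu, but these are the same tool), followed by the general correspondence of Herv\'e between the peripheral eigenspace, the minimal $P$-absorbing compacts classified in Theorem~\ref{classdelta2}, and the extreme invariant measures. In fact your write-up supplies more detail than the paper does --- notably the aperiodicity argument isolating the eigenvalue $1$, the identification of the edge measures via the one-dimensional theory of \cite{LP19}, and the martingale argument for the almost-sure statements --- all of which the paper leaves implicit in its appeal to \cite{herve94}.
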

 \begin{proof}
 By a directed calculation, for all $\varphi\in H_{\al}(\De_2)$,
\[
\vert P\varphi\vert _\al \le \frac{1}{1+\al}\vert \varphi\vert _\al + \Bigg(1+\suml_{i=0}^2 m_{\al}(p_i)\Bigg)|\varphi|_{\infty}.
\]
Hence, by  \cite{Hennion93}, the operator $P$ is quasi-compact on $H_\alpha (\De_2)$. The operator $P$ is Markov, so that 
 $\dv \in H_{\al}(\De_2)$  satisfies $P\dv = \dv$. Therefore, by using the \cite[Theorem 2.2]{herve94}, the eigenspace corresponding to eigenvalue $1$ is nothing but $\ker (P-Id)$. All six above cases can be checked easily by following the \cite{herve94} (also see in \cite{LP19} for a classification in  dimension 1).

 \end{proof}

\begin{rem}
The cases considered in Section 4 where there is a unique invariant probability density all satisfy the case 6 where $\mathcal K_m = \{\De_2\}$, i.e. when $p_i(\e_i)<1$ for all $i=0,1,2$. The question of the existence (hence unicity) of an invariant probability density when $p_i(\e_i)<1$ for all $i=0,1,\ldots,d$  is still open for $d\ge 2$ (it has been solved in $d=1$ in \cite{LP19}).
\end{rem}

 \section{Discussion}

We would like to briefly present here another interesting setting for the Diaconis and Freedman's chain in $\De_d$. 
For any $i=0, \ldots, d$ and $\x \in \Delta_d$,  let $S_i(\x)$ be the  strict convex combination of $\x$ and all vertices $\e_j$ except $\e_i$, i.e. $S_i(\x) := \mathring{\Big(co \{\x, \{\e_j\}_{j\ne i}\}\Big)}.$

Assume that at time $n$, a walker $\Z$  is located at site $Z_n=\x\in \De_d$ and has probability $p_i(\x) $ to move to the domain $S_i(\x)$, the arrival point being    chosen  according to the  uniform distribution  $ U_{S_i}(d\x))$ on this domain.    In other words, the one-step transition probability function of the Markov chain generated by this walker is 
\[
P(\x,d\y) = \sum_{i=0}^d p_i(\x) \frac{1}{|S_i(\x)|}\dv_{S_i(\x)}(\y) d\y, \quad \x \in \mathring{\De}_d.
\]
We illustrate this setting in $\De_2$ in Figure~\ref{fig:2} but a such setting and its applications will be considered in details somewhere else.
\begin{figure}[htb!]
\begin{center}
\begin{tikzpicture}[scale=1.2]
    \draw[color=red, ultra thick] (0,0) -- (3,0);
    \draw[color=red, ultra thick] (0,0) -- (0,3);
    \draw[color=red, ultra thick] (0,3) -- (3,0);
    \draw[color=red, dashed] (0,0) -- (1,1);
    \draw[color=red, dashed] (0,3) -- (1,1);
    \draw[color=red, dashed] (1,1) -- (3,0);
   \node[left] at (0,0) {$e_0$} ; 
   \node[right] at (3,0) {$e_1$} ; 
   \node[left] at (0,3) {$e_2$} ; 
   \node[below,right] at (1,1) {$x$} ; 
   \fill[blue] (0,0) circle (1.5pt);
   \fill[blue] (1,1) circle (1.5pt);   
   \fill[blue] (3,0) circle (1.5pt);
    \fill[blue] (0,3) circle (1.5pt);
    \draw[color=green, ultra thick, ->] (1,1) .. controls (0.7,.7) .. (0.6,1);
     \draw[color=green, ultra thick, ->] (1,1) .. controls (1.1,.5)  .. (1.5,.5);
      \draw[color=green, ultra thick, ->] (1,1) .. controls (1.1,1.1)  .. (1.4,1.2);
     \node[right] at (1.5,0.5) {$p_2(x)$} ; 
      \node[left] at (0.6,1) {$p_1(x)$} ; 
            \node[right] at (1.4,1.2) {$p_0(x)$} ; 
          \node[left] at (0.7,1.5) {$S_1(x)$} ; 
               \node[right] at (1.5,0.2) {$S_2(x)$} ; 
                \node[] at (1.1,1.5) {$S_0(x)$} ; 
         \end{tikzpicture}
\end{center}
\caption{ An alternative model
}
\label{fig:2}
\end{figure}
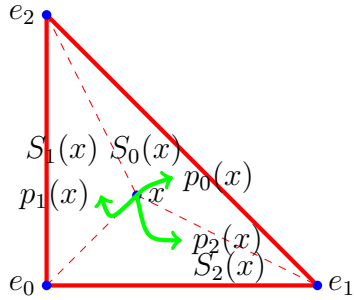

\section*{Acknowledgments}

M. Peign\'e and T.D. Tran would like to thank warmly VIASM for financial support and hospitality where the first ideas of paper has been discussed: M. Peign\'e spent 6 months in 2017 and T.D. Tran spent three months in 2017 and two months in 2019 at VIASM, as visiting scientists. T.D. Tran would also like to thank Institut Denis Poisson for financial support and a warmly and friendly hospitality during his one-month visiting in 2018. We would like to thank J\"{u}rgen Jost for illuminating discussions.

\bibliographystyle{apalike}


\begin{thebibliography}{}

\bibitem[Barnsley et~al., 1988]{Barnsley88}
Barnsley, M.~F., Demko, S.~G., Elton, J.~H., and Geronimo, J.~S. (1988).
\newblock Invariant measures for {M}arkov processes arising from iterated
  function systems with place-dependent probabilities.
\newblock {\em Ann. Inst. H. Poincar\'e Probab. Statist.}, 24(3):367--394.

\bibitem[Barnsley and Elton, 1988]{BE88}
Barnsley, M.~F. and Elton, J.~H. (1988).
\newblock A new class of {M}arkov processes for image encoding.
\newblock {\em Adv. in Appl. Probab.}, 20(1):14--32.

\bibitem[Barnsley et~al., 1989]{Barnsley89}
Barnsley, M.~F., Elton, J.~H., and Hardin, D.~P. (1989).
\newblock Recurrent iterated function systems.
\newblock {\em Constr. Approx.}, 5(1):3--31.
\newblock Fractal approximation.

\bibitem[Bush and Mosteller, 1953]{Bush1953}
Bush, R.~R. and Mosteller, F. (1953).
\newblock A stochastic model with applications to learning.
\newblock {\em Ann. Math. Statist.}, 24(4):559--585.

\bibitem[Diaconis and Freedman, 1999]{DF99}
Diaconis, P. and Freedman, D. (1999).
\newblock Iterated random functions.
\newblock {\em SIAM Rev.}, 41(1):45--76.

\bibitem[Dubins and Freedman, 1966]{Dubins1966}
Dubins, L.~E. and Freedman, D.~A. (1966).
\newblock Invariant probabilities for certain markov processes.
\newblock {\em Ann. Math. Statist.}, 37(4):837--848.

\bibitem[Ferguson, 1973]{Ferguson73}
Ferguson, T.~S. (1973).
\newblock A bayesian analysis of some nonparametric problems.
\newblock {\em Ann. Statist.}, 1(2):209--230.

\bibitem[Ghosh and Ramamoorthi, 2003]{GR03}
Ghosh, J.~K. and Ramamoorthi, R.~V. (2003).
\newblock {\em Bayesian nonparametrics}.
\newblock Springer Series in Statistics. Springer-Verlag, New York.

\bibitem[Guivarc'h and Raugi, 1986]{GR86}
Guivarc'h, Y. and Raugi, A. (1986).
\newblock Products of random matrices: convergence theorems.
\newblock In {\em Random matrices and their applications ({B}runswick, {M}aine,
  1984)}, volume~50 of {\em Contemp. Math.}, pages 31--54. Amer. Math. Soc.,
  Providence, RI.

\bibitem[Harris, 1952]{Harris52}
Harris, T.~E. (1952).
\newblock A method for limit theorems in markov chains.
\newblock {\em Ann. Math. Statis}, 23:141.

\bibitem[Hennion, 1993]{Hennion93}
Hennion, H. (1993).
\newblock Sur un th\'eor\`eme spectral et son application aux noyaux
  lipchitziens.
\newblock {\em Proc. Amer. Math. Soc.}, 118(2):627--634.

\bibitem[Hennion and Herv\'e, 2001]{HH01}
Hennion, H. and Herv\'e, L.~c. (2001).
\newblock {\em Limit theorems for {M}arkov chains and stochastic properties of
  dynamical systems by quasi-compactness}, volume 1766 of {\em Lecture Notes in
  Mathematics}.
\newblock Springer-Verlag, Berlin.

\bibitem[Herv\'e, 1994]{herve94}
Herv\'e, L.~c. (1994).
\newblock \'etude d'op\'erateurs quasi-compacts positifs. {A}pplications aux
  op\'erateurs de transfert.
\newblock {\em Ann. Inst. H. Poincar\'e Probab. Statist.}, 30(3):437--466.

\bibitem[Hofrichter et~al., 2017]{HJT2017}
Hofrichter, J., Jost, J., and Tran, T.~D. (2017).
\newblock {\em Information geometry and population genetics}.
\newblock Understanding Complex Systems. Springer, Cham.
\newblock The mathematical structure of the Wright-Fisher model.

\bibitem[Jost et~al., 2019]{JLLT19}
Jost, J., Le, H., Luu, H., and Tran, T. (2019).
\newblock Probabilistic mappings and bayesian nonparametrics.
\newblock arxiv1905.11448.

\bibitem[Kaijser, 1981]{Kaijser81}
Kaijser, T. (1981).
\newblock On a new contraction condition for random systems with complete
  connections.
\newblock {\em Rev. Roumaine Math. Pures Appl.}, 26(8):1075–1117.

\bibitem[Kapica and Sleczka, 2017]{KS17}
Kapica, R. and Sleczka, M. (2017).
\newblock Random iteration with place dependent probabilities.
\newblock https://arxiv.org/abs/1107.0707.

\bibitem[Karlin, 1953]{Karlin1953}
Karlin, S. (1953).
\newblock Some random walks arising in learning models. i.
\newblock {\em Pacific J. Math.}, 3(4):725--756.

\bibitem[Ladjimi and Peign\'e, 2019]{LP19}
Ladjimi, F. and Peign\'e, M. (2019).
\newblock On the asymptotic behavior of the diaconis–freedman chain on [0,1].
\newblock {\em Statistics \& Probability Letters}, 145:1 -- 11.

\bibitem[Letac, 1986]{Letac86}
Letac, G. (1986).
\newblock A contraction principle for certain {M}arkov chains and its
  applications.
\newblock In {\em Random matrices and their applications ({B}runswick, {M}aine,
  1984)}, volume~50 of {\em Contemp. Math.}, pages 263--273. Amer. Math. Soc.,
  Providence, RI.

\bibitem[Mirek, 2011]{Mirek11}
Mirek, M. (2011).
\newblock Heavy tail phenomenon and convergence to stable laws for iterated
  {L}ipschitz maps.
\newblock {\em Probab. Theory Related Fields}, 151(3-4):705--734.

\bibitem[Nguyen and Volkov, 2019]{MV2020}
Nguyen, T.-M. and Volkov, S. (2019).
\newblock {On a class of random walks in simplexes}.
\newblock arXiv:1709.00174.

\bibitem[Peign\'e, 1993]{Peigne93}
Peign\'e, M. (1993).
\newblock Iterated function systems and spectral decomposition of the
  associated {M}arkov operator.
\newblock In {\em Fascicule de probabilit\'es}, volume 1993 of {\em Publ. Inst.
  Rech. Math. Rennes}, page~28. Univ. Rennes I, Rennes.

\bibitem[Peign\'e and Woess, 2011a]{Peigne11a}
Peign\'e, M. and Woess, W. (2011a).
\newblock {Stochastic dynamical systems with weak contractivity properties I.
  Strong and local contractivity}.
\newblock {\em Colloquium Mathematicum}, 125(1):31--54.

\bibitem[Peign\'e and Woess, 2011b]{Peigne11b}
Peign\'e, M. and Woess, W. (2011b).
\newblock {Stochastic dynamical systems with weak contractivity properties II.
  Iteration of Lipschitz mappings}.
\newblock {\em Colloquium Mathematicae}, 125(1):55--81.

\bibitem[Ramli and Leng, 2010]{RL10}
Ramli, M.~A. and Leng, G. (2010).
\newblock {The stationary probability density of a class of bounded Markov
  processes}.
\newblock {\em Advances in Applied Probability}, 42(4):986--993.

\bibitem[Sethuraman, 1994]{Sethuraman1994}
Sethuraman, J. (1994).
\newblock A constructive definition of dirichlet priors.
\newblock {\em Statistica Sinica}, 4(2):639--650.

\bibitem[Stenflo, 2012]{Stenflo12}
Stenflo, O. (2012).
\newblock A survey of average contractive iterated function systems.
\newblock {\em J. Difference Equ. Appl.}, 18(8):1355--1380.

\bibitem[Tran et~al., 2015a]{THJ15a}
Tran, T., Hofrichter, J., and Jost, J. (2015a).
\newblock The free energy method and the wright-fisher model with 2 alleles.
\newblock {\em Theory in Biosciences}, pages 1--10.

\bibitem[Tran et~al., 2015b]{THJ15b}
Tran, T., Hofrichter, J., and Jost, J. (2015b).
\newblock {The free energy method for the Fokker-Planck equation of the
  Wright-Fisher model}.
\newblock MIS-Preprint 29/2015.

\end{thebibliography}

\Addresses

\end{document}